\documentclass[journal,twoside,web]{ieeecolor}
\usepackage{generic}
\usepackage{cite}
\usepackage{amsmath,amssymb,amsfonts}
\usepackage{algorithmic}
\usepackage{graphicx}
\usepackage{textcomp}
\usepackage{subcaption}

\newtheorem{assumption}{Assumption}

\newtheorem{theorem}{Theorem}

\def\BibTeX{{\rm B\kern-.05em{\sc i\kern-.025em b}\kern-.08em
    T\kern-.1667em\lower.7ex\hbox{E}\kern-.125emX}}
\begin{document}
\title{Resilient Extremum Seeking Control for Cyber-Physical Systems Under Denial-of-Service Attacks}
\author{Filipe da Silva Bastos Teixeira, Pedro~Henrique~Silva~Coutinho, Tiago Roux Oliveira, and Miroslav Krstic
\thanks{F. S. B. Teixeira is with the Graduate Program in Electronic Engineering, Rio de Janeiro State University, Brazil (e-mail: filipedasilvabastosteixeira51@gmail.com).}
\thanks{P. H. S. Coutinho and T. R. Oliveira are with the Department of Electronics and Telecommunication Engineering, Rio de Janeiro State University, Brazil (e-mails: phcoutinho@eng.uerj.br, tiagoroux@uerj.br).}
\thanks{M. Krstic is with the Department of Mechanical and Aerospace Engineering, University of California San Diego, USA (e-mail: mkrstic@ucsd.edu).} 
\thanks{Corresponding author: Tiago Roux Oliveira.}}

\maketitle

\begin{abstract}
Extremum seeking control (ESC) relies on deliberately injected excitation
to extract optimization information from measured outputs, making its
networked implementation particularly vulnerable to denial-of-service
(DoS) attacks. This paper develops a resilient discrete-time ESC
architecture for cyber-physical systems subject to DoS attacks. By holding the most recently successfully transmitted
signals during DoS intervals, the proposed mechanism yields averaged
error dynamics with an exponentially contracting mode under successful
communication and a neutral mode under attack. For deterministic DoS
attacks, practical exponential convergence to the extremum is established
under an average bound on the attack duration, with an explicit
convergence rate depending on the fraction of time under attack. For
probabilistic DoS attacks, almost-sure and in-probability convergence
properties are established via stochastic averaging, with the attack
success probability explicitly entering the convergence rate. A key
finding is that a seemingly natural zero-input strategy can fundamentally
compromise ESC: attacks synchronized with the dither generate an
$\mathcal{O}(1/a)$ bias in the averaged dynamics, where $a$ is the dither
amplitude, and can displace the equilibrium from the true optimizer.
Thus, under the proposed hold-based architecture, increasingly severe
DoS attacks primarily slow the optimization process rather than destroy
its stability, provided that communication is not permanently blocked.
Numerical simulations illustrate the theoretical guarantees and the
failure mechanism induced by dither-synchronized attacks.
\end{abstract}

\begin{IEEEkeywords}
Cyber-physical systems, denial-of-service attacks, extremum seeking
control, networked control systems, resilient control.
\end{IEEEkeywords}

\section{Introduction}
\label{sec:introduction}

Cyber-physical systems (CPSs) increasingly rely on communication
networks to close feedback loops between spatially separated sensing,
computation, and actuation units~\cite{sanchez2019bibliographical}. While such architectures enable
flexibility, scalability, and remote operation, they also expose the
feedback loop to disruptions originating from the communication
infrastructure itself. Among cyber threats, denial-of-service (DoS)
attacks are particularly relevant to networked control systems because
an attacker need not manipulate the transmitted information but merely
prevent its delivery. The resulting loss of measurements or control
commands can degrade performance or even destroy closed-loop stability. 
A substantial body of work has therefore investigated resilient
networked control under packet losses, communication constraints, and
malicious attacks, including the analysis of attack impact and
worst-case adversarial strategies in networked estimation and control;
see, e.g., \cite{schenato2007foundations,dePersis2015iss,feng2017resilient,guo2019worstcase,milosevic2020estimating,cetinkaya2016networked,dolk2016event,persis2016networked,coutinho2023switching,coutinho2025resilient}.


The effect of communication disruption becomes fundamentally different
when feedback is used not only for stabilization but also for
\emph{real-time optimization}. Extremum seeking control (ESC) is a
model-free optimization methodology that steers an unknown system toward
an optimizer using measurements of its performance output and a
deliberately injected excitation signal \cite{Oliveira_Krstic_book_2022}. In classical perturbation-based
ESC, the excitation probes the unknown map, while demodulation of the
measured response generates an estimate of its gradient. Rigorous
averaging-based foundations for discrete-time ESC were established in
\cite{choi2002extremum}, with subsequent extensions to multivariable and
stochastic settings, among others, 
\cite{ghaffari2012multivariable,liu2015stochastic}. This ability to
optimize an unknown map directly from measured performance makes ESC
particularly attractive for cyber-physical implementations in which an
accurate model is unavailable or changes during operation.

More broadly, the interplay between control performance and communication
resources has motivated cross-layer designs in which control, sampling,
and networking decisions are treated jointly
\cite{klugel2020joint}. Within extremum seeking, the integration of ESC
with communication-constrained architectures has recently received
increasing attention. 
%
%
In \cite{premaratne2020sporadic}, ESC with sporadic packet transmission was introduced specifically for networked control systems, with the objective
of reducing the communication burden associated with the optimization
process. Event-triggered implementations of perturbation-based ESC have
also been developed to reduce control updates and communication demands
while preserving practical convergence to the extremum
\cite{rodrigues2025event}. These results demonstrate that ESC can be
redesigned to operate efficiently under limited communication resources.
They address, however, communication constraints arising from the
implementation itself rather than \emph{adversarial} communication
losses. In particular, they do not consider a malicious agent capable of
selecting packet losses so as to interact with the periodic excitation
that generates the extremum-seeking gradient estimate.


Within this context, extremum seeking control under gradient deception attacks was investigated using a switching-systems approach~\cite{galarza2022extremum}. While deception attacks inject false data to disrupt CPS operation, DoS attacks prevent information exchange by blocking communication channels. For distributed Nash equilibrium seeking, a resilient strategy was proposed to cope with communication losses among agents caused by DoS attacks~\cite{shao2022distributed}, and~\cite{tang2025deception} addressed deception in Nash equilibrium. Related resilience issues have also been investigated in source-seeking problems. In particular,~\cite {du2024resilient} investigated distributed source seeking for multi-vehicle systems under Sybil attacks, while~\cite{fu2021resilient} studied cooperative source seeking for double-integrator multi-robot systems under deception attacks. More recently, a distributed resilient source-seeking strategy was proposed for mixed cyberattacks, combining deception and DoS attacks on inter-agent communication~\cite{li2025distributed}. Despite these advances, none of the aforementioned works addresses DoS attacks that disrupt the communication channel between the plant and the controller.

This distinction is consequential. When ESC is closed through a
communication network, the same mechanism that enables model-free
optimization creates a vulnerability that is absent from conventional
networked feedback. The information required by perturbation-based ESC
is encoded in the correlation between the response of the unknown map
and the probing signal. Packet losses can therefore do more than remove
isolated measurements or control updates: depending on how unavailable
data are handled, the attack pattern can modify the correlation from
which the gradient estimate is constructed. Consequently, resilience
results developed for stabilization of networked control systems under
DoS attacks cannot, in general, be directly transferred to an
extremum-seeking loop.

In fact, a malicious agent can exploit the periodic probing structure
instead of simply maximizing the number of blocked transmissions.
Consider, for instance, the seemingly natural policy of setting an
unavailable controller-to-plant command to zero. An attack synchronized
with selected portions of the dither cycle then produces a nonzero
cross-correlation between the attack signal and the ESC demodulation
signal. As shown in this paper, the resulting averaged gradient contains
an attack-induced bias whose dominant term is of order
$\mathcal{O}(1/a)$, where $a$ denotes the dither amplitude. Hence,
reducing the excitation amplitude, as normally required for improving
the accuracy of perturbation-based ESC, does not attenuate this effect;
it amplifies it. The true optimizer may consequently cease to be an
equilibrium of the averaged closed-loop system.

Motivated by this observation, we develop a resilient discrete-time ESC
architecture for networked cyber-physical systems subject to DoS
attacks. The key mechanism is simple but structurally important: when
communication is interrupted, the signal applied to the unknown map is
held at its most recently successfully transmitted value. With the
corresponding packet-loss treatment in the feedback channel, this
mechanism prevents the attack signal from generating the harmful
dither--attack correlation described above. More importantly, the
resulting averaged estimation-error dynamics admit a switched
representation in which successful transmissions generate an
exponentially contracting mode, whereas DoS intervals generate a neutral
mode. Hence, resilience can be characterized in terms of the amount
of communication that remains available, without redesigning the
underlying extremum-seeking algorithm.

The main contributions of this paper are summarized as follows:

\begin{itemize}

    \item We introduce a resilient networked ESC architecture under DoS
    attacks and derive its discrete-time averaged closed-loop dynamics.
    The proposed packet-loss handling mechanism transforms communication
    failures into a neutral mode of the averaged dynamics,
    while successful transmissions retain the contracting ESC mode.

    \item For deterministic DoS attacks, we establish practical
    exponential convergence of both the parameter and the performance
    output under an average bound on the attack duration. The result
    explicitly quantifies the degradation of the convergence rate with
    the fraction of time under attack and guarantees resilience for
    every attack fraction strictly smaller than one.

    \item For probabilistic DoS attacks modeled by a Bernoulli process,
    we establish stochastic convergence by means of discrete-time
    stochastic averaging. In particular, the extremum-seeking error is
    characterized almost surely and in probability, with an explicit
    dependence of the averaged convergence rate on the attack success
    probability.
    
    \item We uncover a vulnerability specific to perturbation-based ESC
    that is hidden in conventional packet-loss analyses. A DoS attack
    correlated with the dither creates an $\mathcal{O}(1/a)$ bias in the
    averaged dynamics under a zero-input policy and can displace the
    closed-loop equilibrium from the true optimizer. This result explains
    why packet-loss compensation cannot be designed independently of the
    probing mechanism and why the proposed hold-input strategy remains
    resilient even against dither-synchronized attacks.

\end{itemize}

An important consequence of these results is that, under the proposed
architecture, the severity of a DoS attack manifests itself primarily
through the \emph{speed of optimization} rather than through loss of
stability. Increasing the deterministic attack fraction or the
attack success probability slows convergence, but does not destroy
it as long as successful communication is not eliminated altogether.
Numerical examples corroborate the theoretical bounds for both
deterministic and probabilistic DoS models and illustrate how
dither-synchronized attacks can compromise a conventional zero-input
implementation while the proposed resilient strategy preserves
convergence to the true extremum.

The remainder of the paper is organized as follows.
Section~\ref{sec:problem} formulates the networked extremum-seeking
problem under DoS attacks. Sections~\ref{sec:main-results-1} and
\ref{sec:main-results-2} present the deterministic and probabilistic
resilience results, respectively. The disruptive effect of
dither-synchronized DoS attacks is analyzed in
Section~\ref{sec:correlation_analysis}. Numerical results are presented
in Section~\ref{sec:results}, and Section~\ref{sec:conclusion} concludes
the paper.

\textit{Notation:} The sets of real numbers, nonnegative integers, and
positive integers are denoted by $\mathbb{R}$, $\mathbb{N}_0$, and
$\mathbb{Z}^{+}$, respectively. For a scalar $x\in\mathbb{R}$, $|x|$
denotes its absolute value, whereas $|\mathcal{S}|$ denotes the cardinality
of a set $\mathcal{S}$. The expectation and probability operators are
denoted by $\mathbb{E}\{\cdot\}$ and $\mathbb{P}\{\cdot\}$, respectively.
For $x\in\mathbb{R}$, $\lfloor x\rfloor$ denotes the greatest integer not
exceeding $x$. The notation $\mathcal{O}(\varepsilon)$ denotes a quantity
whose magnitude is bounded by $C\varepsilon$ for some constant $C>0$
independent of $\varepsilon$, as $\varepsilon\to0$.

%
%
%
%
%
%
%

\section{Problem Formulation}
\label{sec:problem}

Consider the networked ESC system shown in Fig.~\ref{fig:block_diagram}. 
\begin{figure}[!ht]
    \centering
    \includegraphics[width=\linewidth]{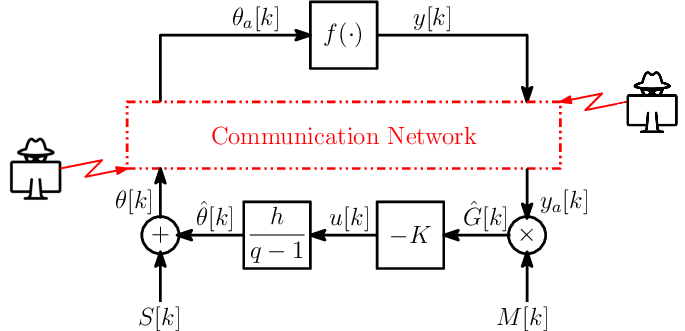}
    \caption{Networked extremum-seeking control system under DoS attacks.}
    \label{fig:block_diagram}
\end{figure}

In this system, the aim is to drive the output $y[k]$ to an optimal point $f^\ast$ of the nonlinear static map $f(\cdot)$ by appropriately adjusting its input. 
Without loss of generality, the static nonlinear map $f(\theta)$ is assumed to achieve an optimum at $\theta = \theta^\ast$, and to be of the following form
\begin{align}\label{eq:f}
    f(\theta) = f^\ast + \frac{H}{2}(\theta - \theta^\ast)^2,
\end{align}
where $H = [\partial^2 f / \partial \theta^2]_{\theta = \theta^\ast}$ is the Hessian, which is positive if the optimum is a minimum and negative if the optimum is a maximum. In this work, we assume that the sign of the Hessian $H$ is known. Note that cubic and higher-order terms in~\eqref{eq:f} are omitted, as they are negligible in local stability analysis via averaging~\cite{choi2002extremum}.
Moreover, the dither signals are defined by~\cite{ghaffari2012multivariable}:
\begin{align}\label{eq:dithers}
    S[k] = a \sin{(\omega h k)}, \quad M[k] = \frac{2}{a} \sin{(\omega h k)},
\end{align}
where $a > 0$ is the amplitude, $h > 0$ is the sampling period, and $\omega > 0$ is the frequency.

In contrast to a standard ESC system, in which the output and the input of the map $f(\cdot)$ are continuously transmitted between the map and adaptation law of the ESC system, in the considered setup, the output of the map $y[k]$ is transmitted to the adaptation law via a general purpose communication network, such that the output signal available is $y_a[k]$. Moreover, the input signal computed by the adaptation law, $\theta[k]$, is also transmitted to the map via the communication network, so that the signal effectively applied to the map is $\theta_a[k]$. 
In this work, we assume the communication network is subject to DoS attacks by a malicious agent. Thus, the signals transmitted through the network, $y[k]$ and $\theta[k]$, can be blocked. \textcolor{black}{To handle the effects of DoS attacks on communication, there are two main strategies: zero-input and hold-input~\cite{schenato2007foundations}. In the former, the transmitted signal is set to zero if the corresponding data packet is not received, whereas in the latter, the most recently received value is retained until a new data packet becomes available. In this work, we adopt the hold-input strategy for both communication channels. Specifically, whenever a transmission is unsuccessful, the most recently successfully received value is retained until a new packet becomes available, such that}
%
%
\begin{align}
    y_a[k] &= 
    \begin{cases}
        y[k], & \text{if transmission is successful},\\
        y_a[k-1], & \text{otherwise},
    \end{cases} \\
     \theta_a[k] &= 
    \begin{cases}
        \theta[k], & \text{if transmission is successful}, \\
        \theta_a[k-1], & \text{otherwise}.
    \end{cases}
\end{align}

Let $\sigma[k] \in \{0,1\}$ be a binary-valued variable that indicates success/failure of packet exchange attempts, such that
\begin{align}\label{eq:sigma}
    \sigma[k] = \begin{cases}
        1, & \text{successful transmission at sample $k$}, \\
        0, & \text{otherwise}.
    \end{cases}
\end{align}
As discussed in~\cite{schenato2007foundations}, TCP-like and UDP-like communication protocols equipped with an acknowledgment mechanism can provide information about $\sigma[k]$ to determine the instants at which DoS attacks block transmission attempts. Based on $\sigma[k]$, we can write 
\begin{align}
    y_a[k] &= \sigma[k] y[k] + (1-\sigma[k]) y_a[k-1], \label{eq:y_a} \\
    \theta_a[k] &= \sigma[k] \theta[k] + (1-\sigma[k]) \theta_a[k-1]. \label{eq:theta_a}
\end{align}

Let $\tilde{\theta}[k]$ be the estimation error defined by
\begin{align}\label{eq:tilde_theta}
    \tilde{\theta}[k] = \hat{\theta}[k] - \theta^*,
\end{align}
which is equivalent to
\begin{align}
    \tilde{\theta}[k] = \hat{\theta}[k] - \theta^\ast = \frac{h}{q-1}u[k] - \theta^\ast.
\end{align}
By multiplying both sides by $q-1$ and noticing that $(q-1)\theta^* = 0$, since ${\theta}^*[k+1] = {\theta}^\ast[k] = \theta^\ast$, the error dynamics becomes
\begin{align}\label{eq:error_dynamics_1}
    \tilde{\theta}[k+1] = \tilde{\theta}[k] + h u[k],
\end{align}
where
\begin{align}\label{eq:u}
    u[k] = - K \hat{G}[k],
\end{align}
and
\begin{align}\label{eq:gradient}
    \hat{G}[k] = M[k] y_a[k]
\end{align}
is the gradient estimate, which depends on the output signal under DoS attack. Using~\eqref{eq:y_a}, \eqref{eq:u} can be still expressed as
\begin{align}\label{eq:u_a}
    u[k] = - \sigma[k] K M[k] y[k].
\end{align}
From \eqref{eq:tilde_theta} and the relation
\begin{align}
    \theta[k] = \hat{\theta}[k] + S[k],
\end{align}
we have that the input provided by the ESC law is
\begin{align}\label{eq:theta_relation}
    \theta[k] = \tilde{\theta}[k] + S[k] + \theta^\ast.
\end{align}
Moreover, the output of the map under the input subject to DoS attacks $\theta_a[k]$ is
\begin{align}\label{eq:f_attack}
    y[k] &= f(\theta_a[k]) = f^\ast + \frac{H}{2} (\theta_a[k] - \theta^\ast)^2.
\end{align}

Using~\eqref{eq:y_a}, \eqref{eq:theta_a}, \eqref{eq:u_a}, \eqref{eq:theta_relation}, and~\eqref{eq:f_attack}, the closed-loop error dynamics \eqref{eq:error_dynamics_1} becomes
\begin{align}
    \tilde{\theta}[k+1] &= \tilde{\theta}[k] - h K \sigma[k] M[k] f^* \nonumber \\
    &- \frac{h K H}{2} \sigma^2[k] M[k] \left( \tilde{\theta}^2[k] + 2\tilde{\theta}[k] S[k] + S^2[k] \right) \nonumber \\
    &- \frac{h K H}{2} M[k] \sigma[k](1 - \sigma[k])(\theta_a[k-1] - \theta^*)^2 \nonumber \\
    & - h K M[k](1-\sigma[k])y_a[k-1].\label{eq:error_dynamics_2} 
\end{align}
Provided that $\sigma^2[k] = \sigma[k]$ and $\sigma[k] (1-\sigma[k]) = 0$, for any $k \in \mathbb{N}_0$, we obtain
\begin{align}
    \tilde{\theta}[k+1] &= \tilde{\theta}[k] + h F_\sigma(k,\tilde{\theta}[k],h), \label{eq:error_dynamics_3}
\end{align}
where
\begin{align}
    F_\sigma(k,\tilde{\theta}[k],h) &= - K \sigma[k] M[k] f^* - \frac{K H}{2} \sigma[k] M[k]  \tilde{\theta}^2[k] \nonumber \\
    &- \frac{K H}{2} \sigma[k] M[k] \left( 2\tilde{\theta}[k] S[k] + S^2[k]\right) \nonumber \\
    & - h K M[k](1-\sigma[k])y_a[k-1]. \label{eq:F_sigma}
\end{align}
It is evident that the closed-loop error dynamics~\eqref{eq:error_dynamics_3} constitute a switched system with two modes, governed by $\sigma[k] \in \{0,1\}$.
The aim of this work is to develop conditions for analyzing the stability of the closed-loop error dynamics~\eqref{eq:error_dynamics_3} in the presence of deterministic and probabilistic DoS attacks. The following sections discuss the specific formulations for each case.

\section{Resilient Extremum Seeking Control Against Deterministic DoS Attacks}
\label{sec:main-results-1}

\subsection{Deterministic DoS Attack Model}

This section presents the deterministic DoS attack model adopted in this work. Let  $\{H_n\}_{n \in \mathbb{Z}^+}$ be the sequence of attacks, where the $n$-th attack is defined by:
\begin{equation}
    H_n \triangleq \{h_n, \dots, h_n + \tau_n - 1\} \label{eq:H_n}
\end{equation}
where $h_n \in \mathbb{Z}^+$ is the attack start time and  $\tau_n \in \mathbb{Z}^+$ is the attack duration. The set of all time instants under attack is:
\begin{equation}
\mathcal{T} \triangleq \bigcup_{n \in \mathbb{Z}^+} H_n.
\end{equation}
It is assumed that the total attack duration satisfies the following assumption, as in \cite{van2020towards,coutinho2025resilient}.
\begin{assumption}\label{assump:duration}
Let $|\Xi(k_1,k_2)|$ be the total attack duration over an interval $\{k_1, k_1+1,\ldots, k_2-1\}$, with $\Xi(k_1,k_2) = \mathcal{T} \cap \{k_1, \dots, k_2-1\}$. The attack duration satisfies
\begin{equation}\label{eq:DoS_duration}
    |\Xi(k_1,k_2)| \leq \eta + \nu (k_2 - k_1), \quad \forall k_1 < k_2, \, k_1,k_2 \in \mathbb{Z}^+,
\end{equation}
where $\eta \geq 0$ is an offset parameter related to the attack duration at $k = 0$ and $\nu \in [0,1]$ is the fraction of time under attack over the interval.
\end{assumption}

\subsection{Average Closed-Loop System}
\label{sec:2-B}

The closed-loop system \eqref{eq:error_dynamics_3} depends on a small parameter $h$, and the discrete-time vector field $F_\sigma(k,\tilde{\theta},h)$ is $T$-periodic in $k$, with 
\begin{align}
T&= \frac{2\pi}{\omega h}\,. \label{eq:T}
\end{align}  
Consequently, the discrete-time averaging method \cite{bai1988averaging,choi2002extremum,plotnikov2004averaging} can be applied to each mode of $F_\sigma(k,\tilde{\theta},h)$ as $h \to 0$.
The averaging approach enables characterization of the resulting average autonomous system, which approximates the original non-autonomous system \eqref{eq:error_dynamics_3}. Intuitively, when the system evolves on a time scale slower than that of the excitation, its behavior is mainly determined by the excitation's average effect over one period.

For each fixed communication mode \(\sigma\in\{0,1\}\), the corresponding vector field is periodic with respect to the dither signals. We therefore average the two mode-dependent vector fields separately and subsequently reconstruct the switched averaged system using the original switching signal \(\sigma[k]\).
Applying the discrete-time averaging technique~\cite{bai1988averaging,choi2002extremum,plotnikov2004averaging} to each mode of \eqref{eq:error_dynamics_3} yields the average system
\begin{align}
\tilde{\theta}_{\rm{av}}[k+1] &= \tilde{\theta}_{\rm{av}}[k] + h F_\sigma^{\rm{av}}(k,\tilde{\theta}_{\rm{av}},h), \label{eq:xav_k+1_event} \\
F_\sigma^{\rm{av}}(k,\tilde{\theta}_{\rm{av}},h) &= \lim_{h \to 0}\lim_{T \to \infty} \frac{1}{T} \sum_{k=s+1}^{s+T} F_\sigma(k,\tilde{\theta}_{\rm{av}},h). \label{eq:fav_event}
\end{align}
Therefore, by ``freezing'' the average variable $\tilde{\theta}_{\rm{av}}$ in \eqref{eq:error_dynamics_3}--\eqref{eq:F_sigma}, we obtain the following autonomous switched average system
\begin{align}
\tilde{\theta}_{\rm{av}}[k+1]&=\left(1 - \sigma[k] h H K\right)\tilde{\theta}_{\rm{av}}[k]. \label{eq:error_dynamics_av}
\end{align}
Based on~\eqref{eq:error_dynamics_av}, the following assumption is made.
\begin{assumption}\label{assump:av_stability}
There exist $h > 0$ and $K \in \mathbb{R}$ such that 
    \begin{align}\label{eq:av_stability}
        |1 - h H K| < 1.
    \end{align}
\end{assumption}

The switched average closed-loop dynamics in~\eqref{eq:error_dynamics_av} reveal that, during DoS attacks ($\sigma[k]= 0$), the system enters a neutral mode where the error does not change, i.e., $\tilde{\theta}[k+1] = \tilde{\theta}[k]$. Conversely, in the absence of attacks ($\sigma[k] = 1$), the condition established in Assumption~\ref{assump:av_stability} ensures that the system~\eqref{eq:error_dynamics_av} evolves in a stable mode, as ensured by~\eqref{eq:av_stability}.

\subsection{Stability Analysis}

The following theorem states the stability analysis of the closed-loop system.

\begin{theorem} \label{thm:stab_dos}
Consider the closed-loop system \eqref{eq:error_dynamics_3}--\eqref{eq:F_sigma}
with the dither signals $S[k]$ and $M[k]$ defined in~\eqref{eq:dithers} and $\sigma[k] \in \{0,1\}$ defined according to~\eqref{eq:sigma}. Suppose that the sequence of DoS attacks satisfies Assumption~\ref{assump:duration} with
\begin{align}\label{eq:bar_nu}
    \nu \leq \bar{\nu}, \quad \bar{\nu} \in [0,1)
\end{align}
and that Assumption~\ref{assump:av_stability} holds. Then, for $h > 0$ sufficiently small, the equilibrium $\tilde{\theta}_{\rm av} = 0$ of the average system~\eqref{eq:error_dynamics_av} is exponentially stable, and the following inequalities hold for the non-average system:
\begin{align}
|\theta[k] - \theta^*| &\leq \rho e^{\lambda k/2} |\theta[0] - \theta^*| + \mathcal{O}(a + h), \label{eq:theta_convergence}
\end{align}
\begin{align}
|y[k] - f^*| &\leq 2 \rho^2 e^{\lambda k} |y[0] - f^*| + \mathcal{O}(a^2 + h^2),\label{eq:y_convergence}
\end{align}
where $\rho = \sqrt{\beta^{-\eta}}$, $\beta = (1-hHK)^2$ and $\lambda = \ln(\beta)(1 - \nu)$.
\end{theorem}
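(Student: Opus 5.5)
The plan is to analyze the switched average system~\eqref{eq:error_dynamics_av} with a quadratic Lyapunov function. We then transfer the bound to the original system through discrete-time averaging, and finally map it to $\theta$ and $y$ through~\eqref{eq:theta_relation} and~\eqref{eq:f_attack}.

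\textbf{Step 1: the average system.} Take $V[k]=\tilde{\theta}_{\rm av}^2[k]$. From~\eqref{eq:error_dynamics_av}, $V[k+1]=\beta V[k]$ when $\sigma[k]=1$ and $V[k+1]=V[k]$ when $\sigma[k]=0$, with $\beta=(1-hHK)^2<1$ by Assumption~\ref{assump:av_stability}. Iterating from $0$ to $k$ gives
\begin{align*}
V[k]=\beta^{\,k-|\Xi(0,k)|}\,V[0].
\end{align*}
Since $\ln\beta<0$, Assumption~\ref{assump:duration} lets us replace $|\Xi(0,k)|$ by its upper bound $\eta+\nu k$, which yields
\begin{align*}
V[k]\le \beta^{-\eta}\beta^{(1-\nu)k}V[0]=\rho^2 e^{\lambda k}V[0].
\end{align*}
Here $\lambda=\ln(\beta)(1-\nu)<0$ because $\nu\le\bar\nu<1$. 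Taking square roots gives $|\tilde{\theta}_{\rm av}[k]|\le\rho e^{\lambda k/2}|\tilde{\theta}_{\rm av}[0]|$. This establishes exponential stability of the origin, uniformly over all switching signals satisfying Assumption~\ref{assump:duration}.

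\textbf{Step 2: averaging.} In each mode the vector field is $T$-periodic in $k$ and smooth in $\tilde\theta$. The average system is exponentially stable uniformly over the admissible switching signals. We can therefore invoke the discrete-time averaging theorem~\cite{bai1988averaging,choi2002extremum}, applied modewise and patched across switching instants. This gives $|\tilde\theta[k]-\tilde\theta_{\rm av}[k]|=\mathcal{O}(h)$ uniformly in $k$ for $h$ small, with matching initial conditions. This is the main obstacle. Averaging is classically stated for a fixed periodic system, whereas here $\sigma[k]$ switches arbitrarily, so the usual near-identity change of variables must be checked to be compatible with the switching. I would handle this as follows. The $\sigma=0$ mode contributes only the $h$-order term $-hKM(1-\sigma)y_a[k-1]$, whose mean over a dither period is small because $y_a$ is frozen during attacks. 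Construct a single transformation $\tilde\theta=z+h w(k,z,\sigma)$, with $w$ the bounded partial sums of $F_\sigma-F^{\rm av}_\sigma$. Then use the uniform exponential bound from Step 1 as a common Lyapunov-type estimate (with the $\eta$ overshoot) to absorb the $\mathcal{O}(h)$ perturbations over an infinite horizon.

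\textbf{Step 3: outputs.} From~\eqref{eq:theta_relation}, $\theta[k]-\theta^*=\tilde\theta[k]+S[k]$ with $|S[k]|\le a$. Combining Steps 1 and 2 gives~\eqref{eq:theta_convergence}, where the $\mathcal{O}(a)$ term absorbs the initial dither offset $S[0]=0$ and $|S[k]|\le a$. For the output, $y[k]-f^*=\tfrac H2(\theta_a[k]-\theta^*)^2$, and $\theta_a[k]$ equals some earlier successfully transmitted $\theta[j]$. Squaring~\eqref{eq:theta_convergence} and using $(x+z)^2\le 2x^2+2z^2$ yields the factor $2\rho^2e^{\lambda k}$ together with an $\mathcal{O}(a^2+h^2)$ residual. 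The term $|\theta[0]-\theta^*|^2$ is rewritten as $(2/H)|y[0]-f^*|$. Finally, the hold delay $j\le k$ only worsens the exponent by a bounded number of steps, which is absorbed into $\rho$ via $\eta$, since consecutive attack lengths are bounded by the offset in Assumption~\ref{assump:duration}.
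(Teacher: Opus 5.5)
Your Step~1 and the successful-transmission case of Step~3 follow the paper's argument: the counting identity \eqref{eq:V_recursion}, the bound $|\Xi(0,k)|\le\eta+\nu k$, and the quadratic/Young estimate that produces $2\rho^2$. The gap relative to the paper is in the attack intervals $\mathcal{W}_i$, where your shifting argument does not give the stated bound \eqref{eq:y_convergence}. An attack interval of length $L$ satisfies $L\le\eta+\nu L$, so $L\le\eta/(1-\nu)$, not $L\le\eta$. Moving the envelope from the last successful instant $\kappa_i$ to the current $k\in\mathcal{W}_i$ then costs $e^{|\lambda|(k-\kappa_i)}\le\beta^{-(1-\nu)L}\le\beta^{-\eta}=\rho^2$. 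You therefore obtain $2\rho^4e^{\lambda k}|y[0]-f^*|$. You cannot ``absorb'' the excess into $\rho$, because $\rho=\sqrt{\beta^{-\eta}}$ is fixed in the statement.

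The missing idea is the paper's. On $\mathcal{W}_i$ the averaged dynamics are neutral, so $\tilde{\theta}_{\rm av}$ is frozen on $\{\kappa_i+1,\ldots,k\}$. The averaged state behind the held input is therefore controlled by $V(\tilde{\theta}_{\rm av}[k])$ at the \emph{current} time. Evaluating \eqref{eq:V_recursion} at $k$ itself and applying the DoS bound there gives the envelope $\rho^2e^{\lambda k}$ directly, with no bound on attack lengths needed. Strictly, since $\sigma[\kappa_i]=1$, one has $V(\tilde{\theta}_{\rm av}[k])=\beta\,V(\tilde{\theta}_{\rm av}[\kappa_i])$ rather than the equality the paper writes. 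So a single factor $\beta^{-1}\to1$ appears, which is much smaller than your $\beta^{-\eta}$.

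Your proposed remedy in Step~2 also fails as described. Consider the partial sums of $F_{\sigma[j]}(j,z)-F^{\rm av}_{\sigma[j]}(z)$, whose leading part is $\sigma[j]\bigl(F_1(j,z)+KHz\bigr)$. They stay bounded only if $\sigma$ is asymptotically uncorrelated with $\sin(\omega hj)$, $\sin^2(\omega hj)$ and $\sin^3(\omega hj)$. Assumption~\ref{assump:duration} does not enforce this. Blocking every other sample during the positive dither half-cycles, and nothing else, is admissible with $\eta=1$, $\nu=1/2$, yet gives $c_1=-1/(2\pi)$ in \eqref{eq:c1}. Then $\sum_j\sigma[j]M[j]$ grows linearly, and the terms $-K\sigma Mf^*$ and $-\tfrac{KH}{2}\sigma M\tilde{\theta}^2$ generate an $\mathcal{O}(1/a)$ drift proportional to $c_1$. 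This is the same type of drift that Section~\ref{sec:correlation_analysis} attributes to the zero-input policy.

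There is a second problem with the construction. One dither period contains $T=2\pi/(\omega h)$ samples, so even for uncorrelated switching these partial sums are of size $\mathcal{O}(1/h)$, and $hw$ is not a small near-identity correction. The paper does not resolve this point either; it only asserts that modewise averaging ``can be applied consistently'' over the admissible class. So you are not behind the paper here, but you should not present the construction as closing the gap. A rigorous version needs an extra hypothesis, such as dwell times spanning many dither periods or attacks uncorrelated with the dither.
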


\begin{proof}
The proof has two parts. First, we show the exponential stability of the average system, and then we show the practical exponential stability of the original system via the averaging theorem.

\subsubsection*{Exponential Stability of the Average System}

Consider the Lyapunov function candidate for the average system \eqref{eq:error_dynamics_av}:
\begin{align}
V(\tilde{\theta}_{\rm av}[k]) = \tilde{\theta}_{\rm av}^2[k]. \label{eq:lyap}
\end{align}
We evaluate the Lyapunov function candidate along the trajectories of each mode of the average system \eqref{eq:error_dynamics_av}. 

Let $\Xi(0, k)$ denote the set of attack instants up to time $k$, and let $\Theta(0, k)$ denote its complement, i.e., the set of samples of successful communication up to time $k$:
\begin{align}
\Theta(0, k) = \{0, 1, \dots, k-1\} \setminus \Xi(0, k).
\end{align}
By definition, the cardinalities satisfy:
\begin{align}
|\Theta(0, k)| = k - |\Xi(0, k)|. \label{eq:normal_bound}
\end{align}
Moreover, since for any $k_1, k_2 \geq 0$, with $k_2 \geq k_1$, we have $\Theta(k_1,k_2) \cup \Xi(k_1,k_2)$, we can write
\begin{align}
    \Theta(k_1,k_2) = \bigcup_{i \in \mathbb{N}} \mathcal{Z}_i \cap \{k_1,\ldots,k_2\} \label{eq:Theta} \\
    \Xi(k_1,k_2) = \bigcup_{i \in \mathbb{N}} \mathcal{W}_{i-1} \cap \{k_1,\ldots,k_2\} \label{eq:Xi}
\end{align}
where 
\begin{align}
    \mathcal{Z}_i &= \{\ell_i,\ldots, \ell_i + v_i - 1\}, \label{eq:Zi} \\
    \mathcal{W}_{i} &= \{\ell_i+v_i,\ldots, \ell_{i+1}-1\}. \label{eq:Wi}
\end{align}
Here, $\ell_i$ is the starting instant of the $i$-th successful transmission interval, and $v_i \ge 1$ is its duration. Consequently, $\mathcal{W}_i$ represents the subsequent interval of unsuccessful transmissions. If $h_0 > 0$, then $\ell_0 = h_0$ and $\mathcal{W}_{-1} = \{0,\ldots, \ell_0-1\}$; if $h_0 = 0$, then $\mathcal{W}_{-1} = \emptyset$. Based on these definitions, $k \in \mathcal{Z}_i \Rightarrow \sigma[k] = 1$ and $k \in \mathcal{W}_i \Rightarrow \sigma[k] = 0$.

Then, if the communication is successful and $k \in \mathcal{Z}_i$, we have $\sigma[k] = 1$ and the system operates in the mode governed by:
\begin{align}
\tilde{\theta}_{\rm av}[k+1] = (1 - h H K) \tilde{\theta}_{\rm av}[k], \quad \forall k \in \mathcal{Z}_i.
\end{align}
Therefore, for any interval $\mathcal{Z}_i$, we have
\begin{align}
    V(\tilde{\theta}_{\rm av}[\ell_i+1]) = (1 - h H K)^2 V(\tilde{\theta}_{\rm av}[\ell_i]) \triangleq \beta V(\tilde{\theta}_{\rm av}[\ell_i]). \label{eq:beta_def}
\end{align}
Hence, by evaluating~\eqref{eq:beta_def} recursively for $k \in \mathcal{Z}_i$, we obtain
\begin{align}
    V(\tilde{\theta}_{\rm av}[k]) = \beta^{k - \ell_i} V(\tilde{\theta}_{\rm av}[\ell_i]). \label{eq:beta_def_2}
\end{align}
Under Assumption~\ref{assump:av_stability}, it follows that $0 < \beta < 1$ and, consequently, $V(\tilde{\theta}_{\rm av}[k]) \leq \beta^{k - \ell_i} V(\tilde{\theta}_{\rm av}[\ell_i])$.

If the communication is unsuccessful due to the presence of DoS attacks and $k \in \mathcal{W}_i$, we have $\sigma[k] = 0$, and the system operates in the mode governed by:
\begin{align}
\tilde{\theta}_{\rm av}[k+1] = \tilde{\theta}_{\rm av}[k],
\end{align}
Thus, for any interval $\mathcal{W}_i$, we have
\begin{align}
V(\tilde{\theta}_{\rm av}[\ell_i+v_i+1]) = V(\tilde{\theta}_{\rm av}[\ell_i+v_i]). \label{eq:neutral_mode}
\end{align}
By evaluating~\eqref{eq:beta_def} recursively for $k \in \mathcal{W}_i$, we obtain
\begin{align}
    V(\tilde{\theta}_{\rm av}[k]) = V(\tilde{\theta}_{\rm av}[\ell_i+v_i]). \label{eq:neutral_mode_2}
\end{align}

Applying \eqref{eq:beta_def_2} and \eqref{eq:neutral_mode_2} recursively from $j\in\{0,\ldots,k-1\}$, using \eqref{eq:normal_bound}, and noting that the state $V(\tilde{\theta}_{\rm av}[k])$ depends on $k$ transitions, we obtain
\begin{align}
    V(\tilde{\theta}_{\rm av}[k]) = \beta^{k - |\Xi(0, k)|} 1^{|\Xi(0, k)|} V(\tilde{\theta}_{\rm av}[0]). \label{eq:V_recursion}
\end{align}
Using \eqref{eq:DoS_duration} to bound $|\Xi(0, k)| \leq \eta + \nu k$ in \eqref{eq:V_recursion}, yields
\begin{align}
V(\tilde{\theta}_{\rm av}[k]) \leq 
\beta^{-\eta} e^{\lambda k} V(\tilde{\theta}_{\rm av}[0]), \quad \forall k \in \mathbb{N}_0, \label{eq:V_bound}
\end{align}
where $\lambda = \ln(\beta)(1 - \nu)$. Therefore, from \eqref{eq:V_bound}:
\begin{align}
|\tilde{\theta}_{\rm av}[k]| \leq \sqrt{\beta^{-\eta}} e^{\lambda k/2} |\tilde{\theta}_{\rm av}[0]|. \label{eq:av_convergence}
\end{align}
Since $0 < \beta < 1$ and $\ln(\beta) < 0$, condition~\eqref{eq:bar_nu} ensures that $\lambda < 0$. Thus, it follows that the origin of the average system \eqref{eq:error_dynamics_av} is exponentially stable.

\subsubsection*{Practical Exponential Stability of the Original System}


Since the averaging approximation is applied mode-wise and the resulting averaged switched system is uniformly exponentially stable over all DoS sequences satisfying Assumption~\ref{assump:duration} and (\ref{eq:bar_nu}), the averaging argument can be applied consistently over this admissible class of switching signals. Moreover, since the vector field $F_\sigma(k, \tilde{\theta},h)$ in~\eqref{eq:F_sigma} is $T$-periodic in $k$ with $T = 2\pi/(\omega h)$ and the average system with state variable~$\tilde{\theta}_{\rm av}[k]$ is exponentially stable, it follows from the averaging theorem in~\cite{bai1988averaging,choi2002extremum,plotnikov2004averaging} that, for $h > 0$ sufficiently small and initial condition $\tilde{\theta}[0]$ sufficiently close to the origin, the solutions of the original system~\eqref{eq:error_dynamics_3} locally exponentially converge to an $\mathcal{O}(h)$ neighborhood such that
\begin{align}
|\tilde{\theta}[k] - \tilde{\theta}_{\rm av}[k]| \leq \mathcal{O}(h). \label{eq:avg_error}
\end{align}

From \eqref{eq:av_convergence} and \eqref{eq:avg_error}, we have that
\begin{align}
|\tilde{\theta}[k]| &\leq |\tilde{\theta}_{\rm av}[k]| + |\tilde{\theta}[k] - \tilde{\theta}_{\rm av}[k]| \nonumber \\
&\leq \sqrt{\beta^{-\eta}} e^{\lambda k/2} |\tilde{\theta}_{\rm av}[0]| + \mathcal{O}(h). \label{eq:theta_bound}
\end{align}
Now, using $\theta[k] = \tilde{\theta}[k] + S[k] + \theta^*$, with $S[k] = a \sin(\omega k h)$, we have
\begin{align}
|\theta[k] - \theta^*| \leq |\tilde{\theta}[k]| + |S[k]|.
\end{align}
Then, from \eqref{eq:theta_bound} and the fact that $S[k]$ is of order $\mathcal{O}(a)$, 
we obtain~\eqref{eq:theta_convergence}.

For the output $y[k] = f(\theta_a[k])$, using the quadratic map expression under DoS attacks in~\eqref{eq:f_attack}, we have
\begin{align}
|y[k] - f^*| &= \frac{|H|}{2} |\theta_a[k] - \theta^*|^2 \nonumber \\
            &= \frac{|H|}{2} |\sigma[k]\theta[k] + (1-\sigma[k])\theta_a[k-1] - \theta^*|^2. \label{eq:y_eq}
\end{align}
Due to the dependence on the operation mode associated with $\sigma[k]$, we proceed by evaluating~\eqref{eq:y_eq} for both cases. If the communication is successful and $k \in \mathcal{Z}_i$, we have $\sigma[k] = 1$ and
\begin{align}
    |y[k] - f^*| &= \frac{|H|}{2} |\theta[k] -\theta^*|^2 \nonumber \\
    &\leq \frac{|H|}{2} \beta^{-\eta} e^{\lambda k}|\theta[0] - \theta^*|^2 + [\mathcal{O}(a+h)]^2 \nonumber \\
    & + |H| \sqrt{\beta^{-\eta}} e^{\lambda k/2}|\theta[0] - \theta^*|\mathcal{O}(a+h), \; \forall k \in \mathcal{Z}_i.\label{eq:y_ineq_1}
\end{align}
By applying the Young's inequality $c d \leq \varepsilon c^2/2 + d^2/(2\varepsilon)$, for $c,d,\varepsilon > 0$, with $c = \sqrt{\beta^{-\eta}} e^{\lambda k/2}|\theta[0] - \theta^*|$, $d = \mathcal{O}(a+h)$ and $\varepsilon = 1$, the inequality~\eqref{eq:y_ineq_1} is upper bounded by
\begin{align}
    |y[k] - f^*| &\leq 2 \beta^{-\eta} e^{\lambda k} \left(\frac{|H|}{2}|\theta[0] - \theta^*|^2\right) \nonumber \\
    &+ (1+|H|/2)[\mathcal{O}(a+h)]^2, \; \forall k \in \mathcal{Z}_i. \label{eq:y_ineq_2}
\end{align}
As $\frac{H}{2}|\theta[0] - \theta^*|^2 = |y[0] - f^*|$, from~\eqref{eq:f_attack}, and $(1+|H|/2)[\mathcal{O}(a+h)]^2$ remains in the order of magnitude $\mathcal{O}(a^2 + h^2)$, it follows from~\eqref{eq:y_ineq_2} that~\eqref{eq:y_convergence} holds for all $k \in \mathcal{Z}_i$.

Now, we evaluate the case in which the communication is unsuccessful and $k \in \mathcal{W}_i$, \textit{i.e.}, $\sigma[k] = 0$. For any interval $\mathcal{W}_i$, we have
\begin{align}
    |y[k] - f^*| &= \frac{|H|}{2} |\theta_a[\ell_i+v_i] -\theta^*|^2 \nonumber \\
                 &= \frac{|H|}{2} |\theta_a[\ell_i+v_i-1] -\theta^*|^2 \nonumber \\
                 &= \frac{|H|}{2} |\theta[\ell_i+v_i-1] -\theta^*|^2, \quad \forall k \in \mathcal{W}_i. \label{eq:y_eq_2}
\end{align}
since $\theta_a[\ell_i+v_i] = \theta_a[\ell_i+v_i-1]$, from~\eqref{eq:theta_a}, which corresponds to the last successfully transmitted input $\theta[\ell_i+v_i-1]$ to the map. 

Let $\kappa_i = \ell_i + v_i - 1$ denote the time of the last successful transmission. From~\eqref{eq:y_eq_2}, we have that $|y[k] - f^*| = |y[\kappa_i] - f^*|$, since the applied input remains constant during $\mathcal{W}_i$. 
To show that this constant output satisfies the exponential envelope for $k > \kappa_i$, we analyze the underlying average system. Because no successful transmissions occur during $\mathcal{W}_i$, the number of successful updates up to time $k$ is identical to that up to time $\kappa_i$. Thus, according to~\eqref{eq:neutral_mode_2} and~\eqref{eq:V_recursion}, the Lyapunov function remains constant during the attack, yielding $V(\tilde{\theta}_{\rm av}[\kappa_i]) = V(\tilde{\theta}_{\rm av}[k])$. 

Applying the global duration bound \eqref{eq:DoS_duration} evaluated at the current time $k$ directly to $V(\tilde{\theta}_{\rm av}[k])$, we obtain
\begin{align}
    |\tilde{\theta}_{\rm av}[\kappa_i]| &= |\tilde{\theta}_{\rm av}[k]| \nonumber \\
    &\leq \sqrt{\beta^{-\eta}} e^{\lambda k/2} |\tilde{\theta}_{\rm av}[0]|, \quad \forall k \in \mathcal{W}_i. \label{eq:theta_av_W}
\end{align}
Notice that \eqref{eq:theta_av_W} strictly bounds the past state at $\kappa_i$ using the tighter exponential envelope at time $k$.

Following the practical convergence steps for $\theta[\kappa_i] = \tilde{\theta}[\kappa_i] + S[\kappa_i] + \theta^*$, and using $|S[\kappa_i]| = \mathcal{O}(a)$ with~\eqref{eq:avg_error}, we have
\begin{align}
    |\theta[\kappa_i] - \theta^*| &\leq |\tilde{\theta}_{\rm av}[\kappa_i]| + |\tilde{\theta}[\kappa_i] - \tilde{\theta}_{\rm av}[\kappa_i]| + |S[\kappa_i]| \nonumber \\
    &\leq \sqrt{\beta^{-\eta}} e^{\lambda k/2} |\tilde{\theta}_{\rm av}[0]| + \mathcal{O}(a+h). \label{eq:theta_kc_final}
\end{align}
Substituting~\eqref{eq:theta_kc_final} back into the output error~\eqref{eq:y_eq_2}, we get
\begin{align}
    |y[k] - f^*| &= \frac{|H|}{2} |\theta[\kappa_i] -\theta^*|^2 \nonumber \\
    &\leq \frac{|H|}{2} \left( \sqrt{\beta^{-\eta}} e^{\lambda k/2}|\tilde{\theta}_{\rm av}[0]| + \mathcal{O}(a+h) \right)^2.
\end{align}
By applying Young's inequality ($cd \leq c^2/2 + d^2/2$) identically to the procedure in~\eqref{eq:y_ineq_1}--\eqref{eq:y_ineq_2}, we obtain
\begin{align}
    |y[k] - f^*| &\leq 2 \beta^{-\eta} e^{\lambda k} \left(\frac{|H|}{2}|\theta[0] - \theta^*|^2\right) \nonumber \\
    &+ (1+|H|/2)[\mathcal{O}(a+h)]^2, \quad \forall k \in \mathcal{W}_i. \label{eq:y_ineq_W}
\end{align}
Therefore, the output bound~\eqref{eq:y_convergence} also holds for all $k \in \mathcal{W}_i$. 
Thus,~\eqref{eq:y_convergence} holds for all $k \in \mathbb{N}_0$.
This concludes the proof. 
\end{proof}

\section{Resilient Extremum Seeking Control Against Probabilistic DoS Attacks}
\label{sec:main-results-2}

\subsection{Probabilistic DoS Attack Model}

This section presents the probabilistic DoS attack model employed in this work. Consider the binary variable $\xi[k] \in \{0,1\}$ that indicates whether a packet exchange failed over the communication channels. When $\xi[k] = 0$, the packet exchange attempt at $k$ is successful. On the other hand, $\xi[k] = 1$ means that either the packet sent from the map or the packet sent from the controller is lost at time $k$. Clearly, $\sigma[k]$ in~\eqref{eq:sigma} can be expressed in terms of $\xi[k]$ as $\sigma[k] = 1 - \xi[k]$. 
Based on~\cite{cetinkaya2016networked}, we make the following assumption for the adopted probabilistic DoS attack model.
\begin{assumption}\label{assump:probability}
    Let $p \in [0,1]$ be the probability of successful DoS attack. $\{\xi[k] \in \{0,1\}\}_{k \in \mathbb{N}_0}$ is a Bernoulli process. This means that it is an ergodic process and $\mathbb{E}\{\xi\} = p$.
\end{assumption}

\subsection{Average Closed-Loop System}

Consider the closed-loop system~\eqref{eq:error_dynamics_3}.
By following similar steps as Section~\ref{sec:problem} and~\cite[Section~IV]{liu2015stochastic}, the average closed-loop system can be obtained as 
\begin{align}
\tilde{\theta}_{\rm{av}}[k+1] &= \tilde{\theta}_{\rm{av}}[k] + h F_p^{\rm{av}}(\tilde{\theta}_{\rm{av}}), \label{eq:xav_prob} \\
F_p^{\rm{av}}(\tilde{\theta}_{\rm{av}}) &= \int_{\Xi} F(\tilde{\theta}, \xi) \mu(d\xi) \\
&= \lim_{N \to \infty} \frac{1}{N+1} F(\tilde{\theta}, \xi[k+1])~\mathrm{a.s.},\label{eq:fav_prob}
\end{align}
where $\Xi$ is the living space of the stochastic variable $\xi[k]$.
Provided that $\mathrm{E}[\xi] = p$, we obtain the following average system
\begin{align}
\tilde{\theta}_{\rm{av}}[k+1]&=\left(1 - (1-p) h H K\right)\tilde{\theta}_{\rm{av}}[k]. \label{eq:error_dynamics_av_prob}
\end{align}
Based on~\eqref{eq:error_dynamics_av_prob}, the following assumption is made.
\begin{assumption}\label{assump:av_stability_prob}
There exist $h > 0$ and $K \in \mathbb{R}$ such that
    \begin{align}\label{eq:av_stability_prob}
        |1 - (1-p)h H K| < 1.
    \end{align}
\end{assumption}

\subsection{Stability Analysis}

\begin{theorem} \label{thm:stab_dos_prob}
Consider the closed-loop system \eqref{eq:error_dynamics_3}--\eqref{eq:F_sigma}
with the dither signals $S[k]$ and $M[k]$ defined in~\eqref{eq:dithers} and $\sigma[k] \in \{0,1\}$ defined according to~\eqref{eq:sigma}. Suppose that the sequence of DoS attacks satisfies Assumption~\ref{assump:probability} with $p \in [0,1)$ and that Assumption~\ref{assump:av_stability_prob} holds. Then, for $h > 0$ sufficiently small, the equilibrium $\tilde{\theta}_{\rm av} = 0$ of the average system~\eqref{eq:error_dynamics_av_prob} is exponentially stable almost surely and in probability, in the sense that:
\begin{align}
&\lim_{h \to 0} \inf \{ k \in \mathbb{N} : |\tilde{\theta}[k]| > \alpha^{k/2} |\tilde{\theta}[0]| + \delta\} = +\infty,~\text{a.s.} \label{eq:theta_convergence_prob_k} \\
&\lim_{h \to 0} \mathbb{P} \{ |\tilde{\theta}[k]| \leq \alpha^{k/2} |\tilde{\theta}[0]| + \delta, \forall k = 0,\ldots,\lfloor N/h \rfloor \} = 1, \label{eq:theta_convergence_prob}
\end{align}
where $\alpha = (1-(1-p)hHK)^2$. Moreover, for some $C_y > 0$:
\begin{align}
&|y[k] - f^*| \leq C_y \alpha^{k} |y[0] - f^*| + \mathcal{O}(a^2) + \mathcal{O}(\delta^2), \forall k < \tau_h^\delta,\label{eq:y_convergence_prob_k} \\
&\lim_{h \to 0} \mathbb{P} \{ |y[k] - f^*| \leq C_y\alpha^{k}|y[0] - f^*| + \mathcal{O}(a^2) + \mathcal{O}(\delta^2), \nonumber \\
& \qquad \qquad \qquad \qquad \forall k = 0,1,\ldots, \lfloor N/h \rfloor\} = 1,\label{eq:y_convergence_prob}
\end{align}
where
\begin{align}
    \tau_h^\delta = \inf\{k \in \mathbb{N} : |\tilde{\theta}[k]| > |\tilde{\theta}[0]|\alpha^{k/2} + \delta\}. \label{eq:tau_h}
\end{align}
\end{theorem}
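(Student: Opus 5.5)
The proof will mirror that of Theorem~\ref{thm:stab_dos}, replacing deterministic averaging with the discrete-time stochastic averaging theorem of~\cite{liu2015stochastic}. It has three stages: exponential stability of the averaged system~\eqref{eq:error_dynamics_av_prob}, transfer of this property to the original stochastic system~\eqref{eq:error_dynamics_3}, and conversion of the parameter bound into an output bound through~\eqref{eq:f_attack}.

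\emph{Stage 1: the averaged system.} First, compute $F_p^{\rm av}$ explicitly. In~\eqref{eq:F_sigma}, set $\sigma[k]=1-\xi[k]$. Use that $\xi[k]$ is independent of the deterministic dither, so mixed averages factor as $\mathbb{E}\{1-\xi\}$ times a dither average. Then apply $\frac{1}{T}\sum M=0$, $\frac{1}{T}\sum M S=1$ and $\frac{1}{T}\sum M S^2=0$. The term $hKM(1-\sigma)y_a[k-1]$ carries an extra factor $h$ and is dropped in the limit $h\to0$. This yields~\eqref{eq:error_dynamics_av_prob}.

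With $V=\tilde{\theta}_{\rm av}^2$ we get $V[k+1]=\alpha V[k]$. Assumption~\ref{assump:av_stability_prob} gives $\alpha\in[0,1)$, hence $|\tilde{\theta}_{\rm av}[k]|=\alpha^{k/2}|\tilde{\theta}_{\rm av}[0]|$. Because $p<1$, the factor $(1-p)hHK$ has the same sign as $hHK$, so the condition is attainable.

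\emph{Stage 2: transfer to the stochastic system.} Next, I would verify the hypotheses of the stochastic averaging theorems in~\cite[Section~IV]{liu2015stochastic}:
\begin{itemize}
\item $F(\tilde{\theta},\xi)$ is locally Lipschitz in $\tilde{\theta}$ uniformly in $\xi$, since it is polynomial in $\tilde{\theta}$ with bounded coefficients and $\xi\in\{0,1\}$;
\item $\{\xi[k]\}$ is an ergodic Bernoulli process with invariant measure $\mu$, by Assumption~\ref{assump:probability};
\item the averaged system has an exponentially stable origin, by Stage~1.
\end{itemize}
The theorem then gives, for every $\delta>0$, that the exit time $\tau_h^\delta$ in~\eqref{eq:tau_h} tends to $+\infty$ almost surely as $h\to0$, which is~\eqref{eq:theta_convergence_prob_k}. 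It also gives the finite-horizon probability statement over $k\le\lfloor N/h\rfloor$, which is~\eqref{eq:theta_convergence_prob}.

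The main obstacle is the structure of the model. The attack variable $\xi$ is a fast \emph{random} process, while the dither is a fast \emph{deterministic} periodic signal, so the theorem must be applied jointly to both. I would handle this by treating the pair $(k \bmod T,\xi[k])$ as an ergodic Markov chain on a product space with product invariant measure; its ergodicity follows because the Bernoulli sequence is independent of the periodic index. A second difficulty is the memory term $y_a[k-1]$, which makes $F$ depend on the past. I would argue that it is $\mathcal{O}(h)$ in $hF$ and absorb it as a vanishing perturbation. Alternatively, augment the state with $\theta_a[k-1]$ and note that this component is bounded on the event $k<\tau_h^\delta$.

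\emph{Stage 3: output bounds.} Finally, for the output, on $k<\tau_h^\delta$ write $|\theta[k]-\theta^*|\le\alpha^{k/2}|\tilde{\theta}[0]|+\delta+\mathcal{O}(a)$. Use $\theta_a[k]=\theta[\kappa]$, where $\kappa\le k$ is the last successful transmission time, exactly as in~\eqref{eq:y_eq_2}. If $\kappa<k$, we have $\alpha^{\kappa/2}\ge\alpha^{k/2}$, so the bound is not automatically monotone. I would handle this on the event where a sample path obeys the envelope at time $\kappa$, enlarging the constant via $C_y$, or simply bound $\alpha^{\kappa}$ by $\alpha^{k}$ times the factor $\alpha^{-(k-\kappa)}$.

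Squaring, using $|y-f^*|=\tfrac{|H|}{2}|\theta_a-\theta^*|^2$, and applying Young's inequality as in~\eqref{eq:y_ineq_1}--\eqref{eq:y_ineq_2} gives~\eqref{eq:y_convergence_prob_k}. Relating $|\tilde{\theta}[0]|^2$ to $|y[0]-f^*|$ via~\eqref{eq:f_attack} also requires absorbing $\mathcal{O}(a)$ terms into $C_y$. Combining this with~\eqref{eq:theta_convergence_prob} yields~\eqref{eq:y_convergence_prob}.
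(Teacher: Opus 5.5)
Your Stages 1 and 2 follow the paper's argument. The paper also uses $V=\tilde{\theta}_{\rm av}^2$ with $V[k+1]=\alpha V[k]$. It invokes \cite[Lemma~7, Theorem~8]{liu2015stochastic} on the basis of boundedness in $\xi$ and local Lipschitz continuity in $\tilde{\theta}$ uniformly in $\xi$. It then gets \eqref{eq:theta_convergence_prob_k}--\eqref{eq:theta_convergence_prob} from the triangle inequality, which is equivalent to your direct use of the exit time. The output part likewise uses $\tau_h^\delta$, Young's inequality and $|\tilde{\theta}[0]|^2\le 2|\theta[0]-\theta^*|^2+2|S[0]|^2$. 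Your product-chain device $(k \bmod T,\xi[k])$ is not in the paper, which simply applies Lemma~7 to $F(\tilde{\theta},\xi)$. As stated, it does not fit the hypotheses of \cite{liu2015stochastic}: the chain depends on the small parameter through $T=2\pi/(\omega h)$, which need not even be an integer. So it does not actually resolve the dither issue you raise.

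The genuine gap is the DoS-interval step of Stage 3. Writing $\alpha^{\kappa}=\alpha^{k}\alpha^{-(k-\kappa)}$ and absorbing $\alpha^{-(k-\kappa)}$ into $C_y$ fails. Here $k-\kappa$ is the length of the current run of attacks, and Bernoulli attacks produce arbitrarily long runs with positive probability. The stopping time $\tau_h^\delta$ does not cut these runs off, because the frozen error may sit well below its envelope. So this factor has no path-independent bound, and \eqref{eq:y_convergence_prob_k} does not follow. For \eqref{eq:y_convergence_prob}, a longest-run estimate showing $h\max_{k\le N/h}(k-\kappa)\to0$ in probability would rescue it, but you do not give one. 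Your alternative, restricting to paths that obey the envelope at $\kappa$, just reproduces $\alpha^{\kappa}$.

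The missing idea is the freezing argument the paper imports from Theorem~\ref{thm:stab_dos} (cf.~\eqref{eq:theta_av_W}). Since nothing is updated during an attack, the frozen state is controlled by the envelope at the \emph{current} time $k$. Because \eqref{eq:error_dynamics_av_prob} is no longer switched, you must apply this to the true error rather than the averaged one:
\begin{itemize}
\item By \eqref{eq:u_a}, $u[j]=0$ when $\sigma[j]=0$, so $\tilde{\theta}[k]=\tilde{\theta}[\kappa+1]$ on the run after $\kappa$. The stray $h^2$ term in \eqref{eq:F_sigma} adds only $\mathcal{O}(h)$, since partial sums of $M[j]$ are $\mathcal{O}(1/(a\omega h))$.
\item The last successful step gives $|\tilde{\theta}[\kappa+1]-\tilde{\theta}[\kappa]|=h|F_\sigma(\kappa,\tilde{\theta}[\kappa],h)|=\mathcal{O}(h/a)$.
\item Since $k<\tau_h^\delta$, you get $|\tilde{\theta}[\kappa]|\le\alpha^{k/2}|\tilde{\theta}[0]|+\delta+\mathcal{O}(h/a)$, and the last term is absorbed into $\delta$ for small $h$.
\end{itemize}
Hence $|\theta_a[k]-\theta^*|=|\tilde{\theta}[\kappa]+S[\kappa]|$ obeys the time-$k$ envelope with no run-length factor, and your squaring and Young step then goes through.
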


\begin{proof}
The proof is divided into two parts. First, we show the exponential stability of the average system, and then we show the stability of the original system via the stochastic averaging theorem for discrete-time systems~\cite{liu2015stochastic}.

\subsubsection*{Exponential Stability of the Average System}

To analyze the stability of the origin $\tilde{\theta}_{\text{av}} = 0$ of~\eqref{eq:error_dynamics_av_prob}, consider the following Lyapunov function candidate:
\begin{align}
V(\tilde{\theta}_{\text{av}}[k]) &= \tilde{\theta}_{\text{av}}^2[k]
\end{align}
By evaluating it along the trajectories of~\eqref{eq:error_dynamics_av_prob}, we obtain:
\begin{align}
V(\tilde{\theta}_{\rm {av}}[k+1])  
&= \alpha V(\tilde{\theta}_{\rm {av}}[k]),
\end{align}
with $\alpha = (1 - (1-p)hHK)^2$.  Under Assumption~\ref{assump:av_stability_prob}, it follows that $0 < \alpha < 1$ and, consequently, 
\begin{align}
    V(\tilde{\theta}_{\rm av}[k]) \leq \alpha^{k} V(\tilde{\theta}_{\rm av}[0]).
\end{align}
Therefore, we obtain
\begin{align}
    |\tilde{\theta}_{\rm av}[k]| \leq \alpha^{k/2} |\tilde{\theta}_{\rm av}[0]|, \label{eq:theta_av_convergence_prob}
\end{align}
which ensures the origin of the average system~\eqref{eq:error_dynamics_av_prob} is exponentially stable.

\subsubsection*{Almost surely and in probability convergence of the original stochastic system}

Since the vector field~\eqref{eq:F_sigma} is a continuous function of $\tilde{\theta}$ and $\xi = 1 - \sigma$, for any $\tilde{\theta} \in \mathbb{R}$ it is a bounded function of $\xi$, and it satisfies the locally Lipschitz condition in $\tilde{\theta} \in \mathbb{R}$ uniformly in $\xi$ satisfying Assumption~\ref{assump:probability}, it follows from~\cite[Lemma~7]{liu2015stochastic} that
\begin{align}
    \lim_{h \to 0} \sup_{0 \leq k \leq \lfloor N/h \rfloor} |\tilde{\theta}[k] - \tilde{\theta}_{\text{av}}[k]| = 0 \quad \rm{a.s.}
\end{align}
Then, from~\cite[Theorem~8]{liu2015stochastic}, we ensure that, for any $\delta > 0$ and $N \in \mathbb{N}$:
\begin{align}
    &\lim_{h \to 0} \inf\{k \in \mathbb{N}: |\tilde{\theta}[k] - \tilde{\theta}_{\text{av}}[k]| > \delta\} = +\infty~\rm{a.s.} \label{eq:error_delta} \\
    &\lim_{h \to 0} \mathbb{P}\left\lbrace \sup_{0 \leq k \leq \lfloor N/h \rfloor} |\tilde{\theta}[k] - \tilde{\theta}_{\text{av}}[k]| > \delta \right\rbrace = 0. \label{eq:error_delta_prob}
\end{align}

From the triangle inequality, we obtain $|\tilde{\theta}[k]| \leq |\tilde{\theta}_{\text{av}}[k]| + |\tilde{\theta}[k] - \tilde{\theta}_{\text{av}}[k]|$. From~\eqref{eq:theta_av_convergence_prob}, \eqref{eq:error_delta},~\eqref{eq:error_delta_prob}, yields directly \eqref{eq:theta_convergence_prob_k}--\eqref{eq:theta_convergence_prob}.
These results imply that $|\tilde{\theta}[k]|$ exponentially converges, both almost surely and in probability, to below an arbitrarily small residual value $\delta$ over an arbitrarily long time interval, which tends to infinity as $h \to 0$~\cite{liu2015stochastic}. 

To prove the output convergence to the extremum, for any $h > 0$, consider a stopping time $\tau_h^\delta$ in~\eqref{eq:tau_h}. Then by~\eqref{eq:theta_convergence_prob_k} we have that $\lim_{h \to 0} \tau_h^\delta = +\infty$, a.s. and
\begin{align}
|\tilde{\theta}[k]| \le |\tilde{\theta}[0]|\alpha^{k/2} + \delta, \forall k < \tau_h^\delta.  \label{eq:ultimate_theta}
\end{align}

Since $y[k] = f^\ast + \frac{H}{2} (\theta_a[k] - \theta^\ast)^2$ as in~\eqref{eq:y_eq}, due to the dependence on the operation mode associated with $\sigma[k] = 1 - \xi[k]$, we proceed by evaluating~\eqref{eq:y_eq} for both cases. If the communication is successful and $k \in \mathcal{Z}_i$, with $\mathcal{Z}_i$ defined as in~\eqref{eq:Zi}, we have $\sigma[k] = 1$ and
\begin{align}
    |y[k] - f^*| &= \frac{|H|}{2} |\theta[k] -\theta^*|^2 = \frac{|H|}{2} |\tilde{\theta}[k] + S[k]|^2 \nonumber \\
    &\leq C_h \alpha^{k}|\tilde{\theta}[0]|^2 + \mathcal{O}(a^2) + \mathcal{O}(\delta^2), \; \forall k \in \mathcal{Z}_i,\label{eq:y_ineq_1_prob}
\end{align}
whenever $k < \tau_h^\delta$. To express this bound in terms of the initial output error, we use $|\tilde{\theta}[0]|^2 = |\theta[0] - \theta^* - S[0]|^2$ and the inequality $(c-d)^2 \leq 2c^2 + 2d^2$ to obtain:
\begin{align}
    |\tilde{\theta}[0]|^2 &\leq 2|\theta[0] - \theta^*|^2 + 2|S[0]|^2 \nonumber \\
    &\leq \frac{4}{|H|}|y[0] - f^*| + \mathcal{O}(a^2).
\end{align}
Substituting this into~\eqref{eq:y_ineq_1_prob} and defining $C_y = 4 C_h / |H|$, the exponentially decaying $\mathcal{O}(a^2)$ term is absorbed into the steady-state bound, yielding:
\begin{align}
    |y[k] - f^*| &\leq C_y \alpha^{k}|y[0] - f^*| + \mathcal{O}(a^2) + \mathcal{O}(\delta^2), \; \forall k \in \mathcal{Z}_i.
\end{align}

By following similar arguments as in the proof of~Theorem~\ref{thm:stab_dos}, we can obtain that this inequality also holds for $k \in \mathcal{W}_i$, with $\mathcal{W}_i$ in~\eqref{eq:Wi}, since the output remains physically frozen at the last successfully transmitted value. Thus, \eqref{eq:y_convergence_prob_k} holds for all $k < \tau_h^\delta$.

Similarly, by combining this result with the probability limit in~\eqref{eq:theta_convergence_prob}, we obtain~\eqref{eq:y_convergence_prob}, which implies that the output can exponentially approach the extremum $f^\ast$ with probability 1 over finite horizons if $a$ is chosen sufficiently small. This concludes the proof.
\end{proof}

\section{Dither-Synchronized DoS ATTACKS on Extremum Seeking Control}
\label{sec:correlation_analysis}

In both the deterministic and stochastic frameworks, the exponential convergence rates scale inversely with the attack intensity, directly parameterized by the maximum attack duration fraction $\nu \in [0,1)$ and the attack success probability $p \in [0,1)$, as shown in Theorem~\ref{thm:stab_dos} and Theorem~\ref{thm:stab_dos_prob} for deterministic and probabilistic DoS attacks, respectively. While a higher frequency of unsuccessful transmissions due to DoS attacks reduces the convergence rate, the closed-loop system remains stable as long as the attacker cannot achieve permanent network blockage.

However, in a classical extremum-seeking control setup without an adequate resilient strategy, the malicious agent can launch a disruptive DoS attack that damages operation and drives the closed-loop system away from the optimum operating point $(\theta^\ast,f^\ast)$. For instance, consider a DoS attack affecting the controller-to-actuator channel that forces the actuator input to zero when communication is blocked ($\theta_a[k] = 0$), while the sensor continues to transmit the actual map output to the controller ($y_a[k] = y[k]$). In this case, the closed-loop system is subject to $\theta_a[k] = \sigma[k] \theta[k]$ and $y_a[k] = y[k]$, where $\sigma[k]$ is the signal that indicates whether the malicious agent blocks communication $\sigma[k] = 0$ or not $\sigma[k] = 1$. 
In this situation, the closed-loop system becomes
\begin{align}
    \tilde{\theta}[k+1] = \tilde{\theta}[k] + h G_{\sigma}(k,\tilde{\theta},h)
\end{align}
where
\begin{align} 
    G_{\sigma}(k,\tilde{\theta},h) &= - K M[k] f^* - \frac{K H}{2} \sigma[k] M[k] \tilde{\theta}^2[k] \nonumber \\ 
    &- \frac{K H}{2} \sigma[k] M[k] \left( 2\tilde{\theta}[k] S[k] + S[k]^2 \right) \nonumber \\ 
    &- \frac{K H}{2} M[k] (1 - \sigma[k])(\theta^*)^2, 
\end{align}
which depends on a small parameter $h$ and the vector field $G_{\sigma}(k,\tilde{\theta},h)$ is $T$-periodic in $k$, with $T$ given as in~\eqref{eq:T}.
Applying the discrete-time averaging technique yields the average system
\begin{align} 
\tilde{\theta}_{\text{av}}[k+1] &= \tilde{\theta}_{\text{av}}[k] + h G_{\sigma}^{\text{av}}(k,\tilde{\theta}_{\text{av}},h), \\ 
G_{\sigma}^{\text{av}}(k,\tilde{\theta}_{\text{av}},h) &= \lim_{h \to 0}\lim_{T \to \infty} \frac{1}{T} \sum_{k=s+1}^{s+T} G_{\sigma}(k,\tilde{\theta}_{\text{av}},h), \end{align}
where
\begin{align}
    \tilde{\theta}_{\text{av}}[k+1] &= \tilde{\theta}_{\text{av}}[k] 
    - h K H \frac{c_1}{a} \left( \tilde{\theta}_{\text{av}}^2[k] - (\theta^*)^2 \right) \nonumber \\
    & - h K H\left( 2 c_2 \tilde{\theta}_{\text{av}}[k] + a c_3 \right),
\end{align}
with 
\begin{align}
    c_1 = \lim_{h \to 0}\lim_{T \to \infty} \frac{1}{T} \sum_{k=s+1}^{s+T} \sigma[k] \sin(\omega h k) \label{eq:c1}\\
    c_2 = \lim_{h \to 0}\lim_{T \to \infty} \frac{1}{T} \sum_{k=s+1}^{s+T} \sigma[k] \sin^2(\omega h k)\\
    c_3 = \lim_{h \to 0}\lim_{T \to \infty} \frac{1}{T} \sum_{k=s+1}^{s+T} \sigma[k] \sin^3(\omega h k)
\end{align}
are the cross-correlation coefficients. 
Unlike the resilient strategies proposed in Sections~\ref{sec:main-results-1} and~\ref{sec:main-results-2}, the signal $\sigma[k]$ remains coupled with the demodulation signal $M[k]$, since it cannot be isolated and considered as a switch state. As ESC requires the dither amplitude $a$ to be sufficiently small, any attack sequence that correlates with the dither, \textit{i.e.}, $c_1 \neq 0$, introduces a bias of order $\mathcal{O}(1/a)$ that dominates the local dynamics. Thus, the steady-state equilibrium of the error $\tilde{\theta}_{\text{eq}}$ is
$$\frac{c_1}{a} \left( \tilde{\theta}_{\text{eq}}^2 - (\theta^*)^2 \right) \approx 0 \implies \tilde{\theta}_{\text{eq}} \approx \pm \theta^*$$
Consequently, the optimal parameter $\theta^*$ is not an equilibrium point, since $\tilde{\theta} = 0$ does not satisfy the averaged equilibrium condition when $c_1 \neq 0$. Instead, the parameter $\theta[k] = \tilde{\theta}[k] + \theta^*$ converges toward two attractors:
$\theta_{\text{eq}} = 0$, when the attack signal is correlated with the positive cycle of the dither and the cross-correlation is $c_1 < 0$; and $\theta_{\text{eq}} = 2\theta^*$, when the attack signal is correlated with the negative dither cycle and $c_1 > 0$. Thus, this strategy cannot ensure the system converges to its optimum point in the presence of DoS attacks, unlike the resilient approaches established in the previous sections.

\section{Numerical Examples}
\label{sec:results}

This section presents numerical examples to illustrate the operation of the proposed resilient extremum seeking control strategies against deterministic and probabilistic DoS attacks. For this purpose, consider a static nonlinear map parameterized by
\begin{align}
    \theta^\ast = 5, \quad f^\ast = 0, \quad H = 2.
\end{align}
The control parameters are selected as $a = 0.1$, $\omega = 10$ rad/s, $h = 0.1$ s, and the controller gain $K = 0.05$, satisfying the stability condition $|1 - h K H| < 1$ from Assumption~\ref{assump:av_stability}.

\subsection{Operation Under Deterministic DoS Attacks}

The simulations of the closed-loop system under deterministic DoS attacks with attack fractions $\bar{\nu} \in \{0.2, 0.7, 0.9\}$ are shown in Fig.~\ref{fig:deterministic_simulation}. In particular, Fig.~\ref{fig:deterministic_simulation}(a) presents the system output $y[k]$, while the Fig.~\ref{fig:deterministic_simulation}(b) presents the input parameter $\theta[k]$. The optimal reference values $f^*$ and $\theta^*$, respectively, are indicated by dashed lines.

\begin{figure}[!ht]
    \centering
    \begin{subfigure}[b]{\columnwidth}
         \centering
         \includegraphics[width=\textwidth]{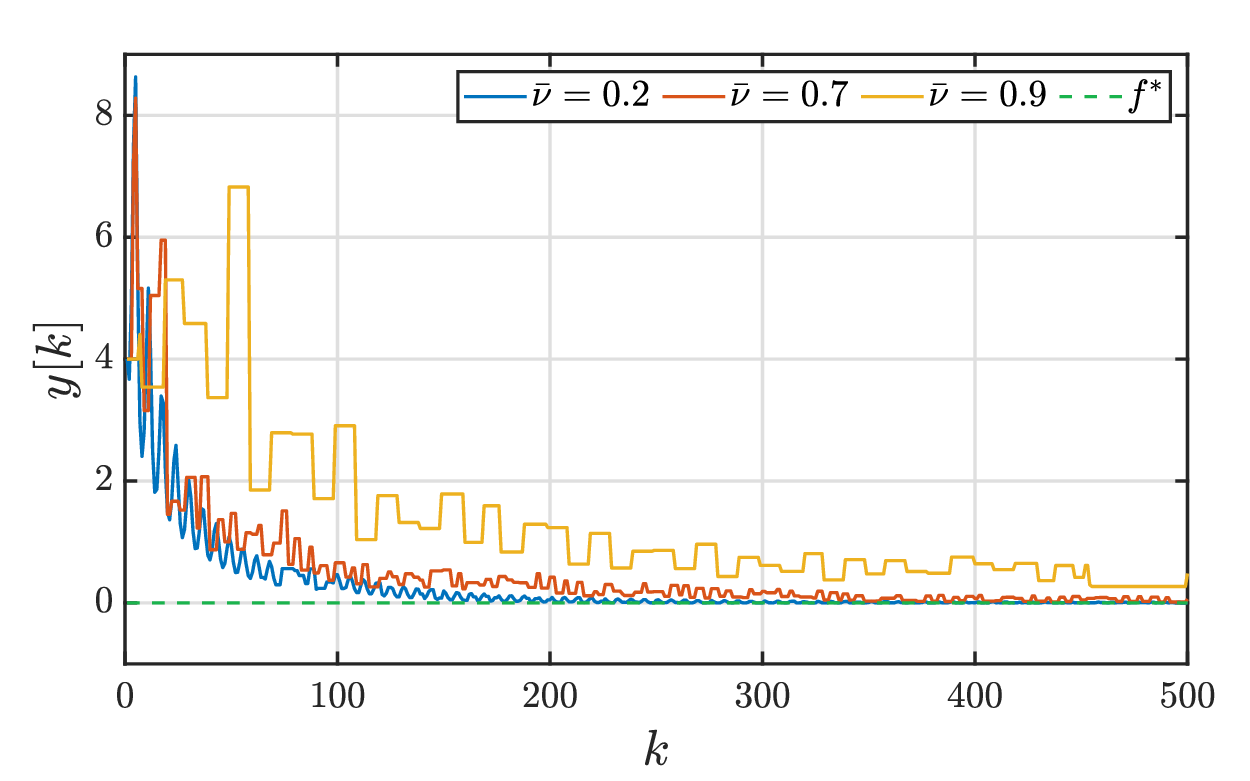}
         \caption{Output of the nonlinear map --- $y[k]$.}
     \end{subfigure}
     \begin{subfigure}[b]{\columnwidth}
         \centering
         \includegraphics[width=\textwidth]{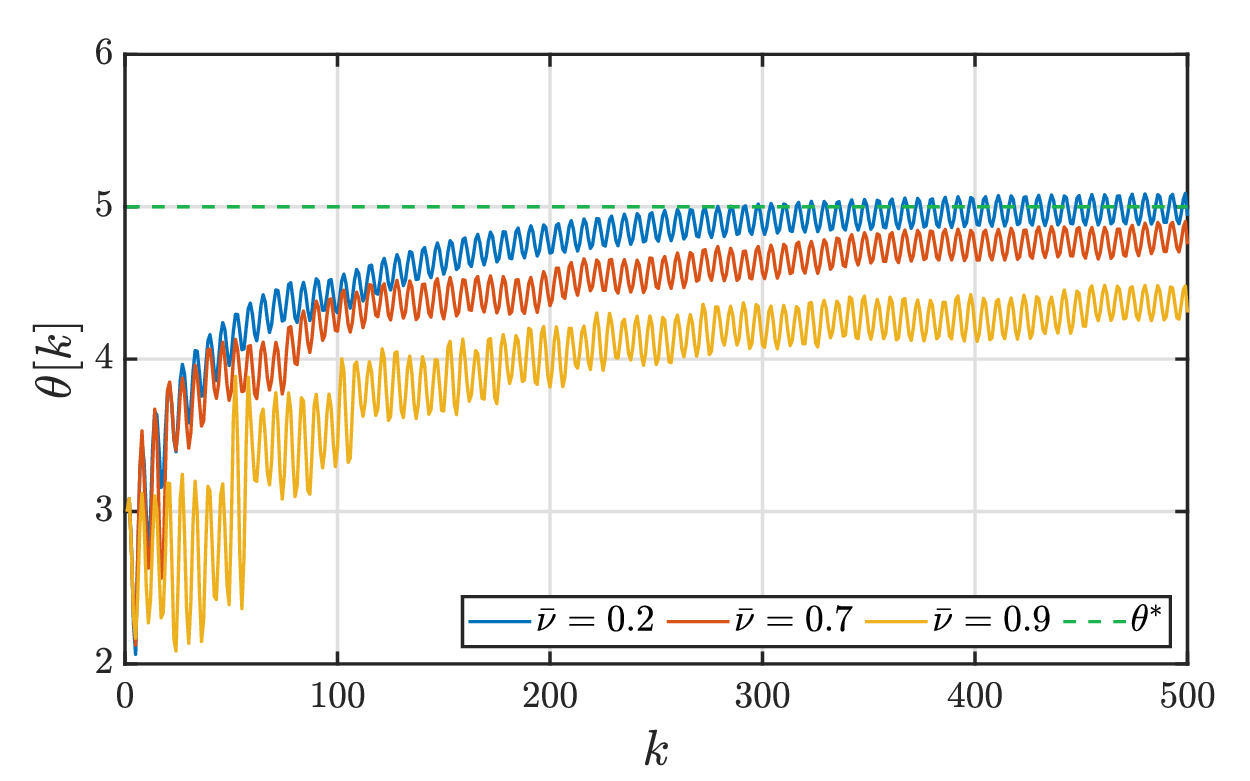}
         \caption{Input transmitted to the nonlinear map --- $\theta[k]$.}
     \end{subfigure}
    \caption{Simulation of the closed-loop system under the proposed resilient strategy against deterministic DoS attacks with different attack bounds $\bar{\nu}$.}
    \label{fig:deterministic_simulation}
\end{figure}

For $\bar{\nu} = 0.2$ (blue lines), the closed-loop system converges faster than in the other cases toward the extremum. With larger attack fractions of $\bar{\nu} = 0.7$ (red lines) and $\bar{\nu} = 0.9$ (yellow lines), the convergence rates become slower. These results illustrate that the proposed resilient strategy guarantees practical exponential stability for any $\nu < 1$, as established in Theorem~\ref{thm:stab_dos}, although the convergence rate is affected as the attack fraction increases. However, the residual error remains bounded within the predicted orders $\mathcal{O}(a + h)$ for $\theta[k]$ and $\mathcal{O}(a^2 + h^2)$ for $y[k]$, showing the effectiveness of the resilient scheme even under severe attack conditions.

\subsection{Operation Under Probabilistic DoS Attacks}

The simulations of the closed-loop system under probabilistic DoS attacks with attack probabilities $p \in \{0.2, 0.7, 0.9\}$ are shown in Fig.~\ref{fig:probabilistic_simulation}, where Fig.~\ref{fig:probabilistic_simulation}(a) presents the system output $y[k]$ and Fig.~\ref{fig:probabilistic_simulation}(b) the input parameter $\theta[k]$. Note that Assumption~\ref{assump:av_stability_prob} is satisfied for all considered attack probabilities.

\begin{figure}[!ht]
    \centering
    \begin{subfigure}[b]{\columnwidth}
         \centering
         \includegraphics[width=\textwidth]{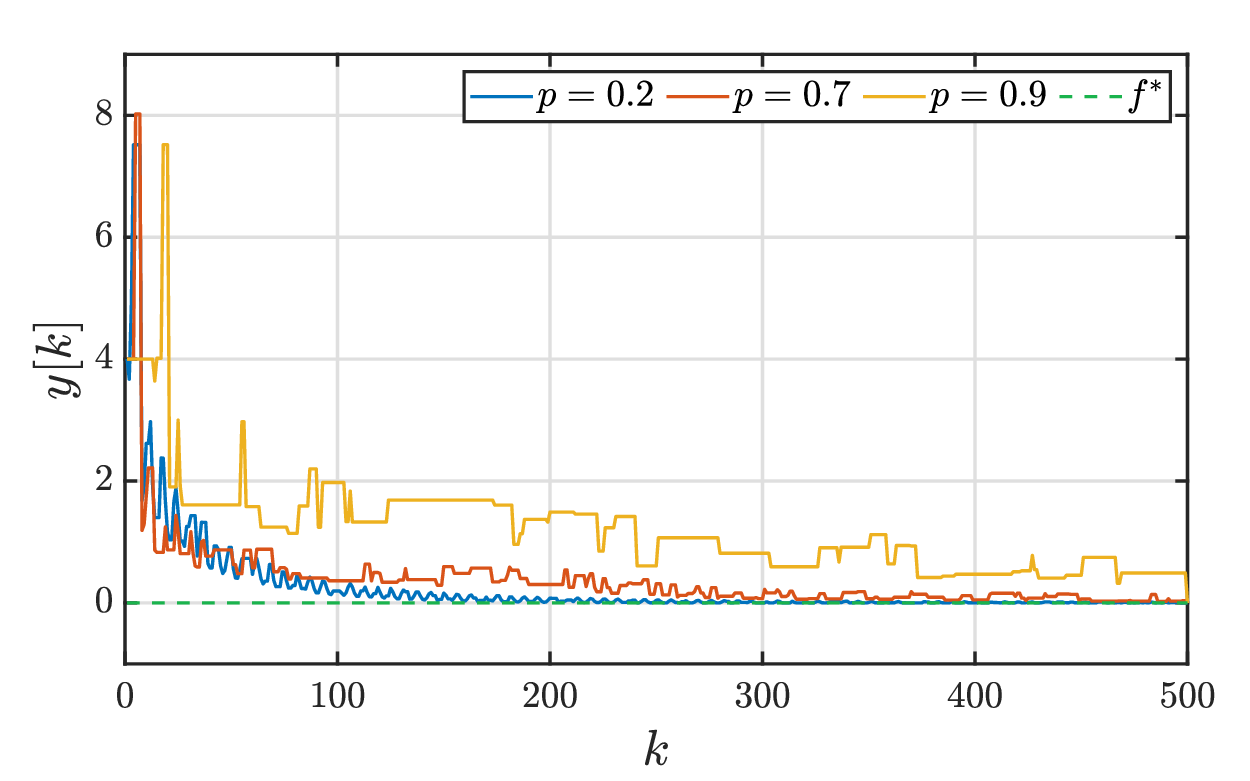}
         \caption{Output of the nonlinear map --- $y[k]$.}
     \end{subfigure}
     \begin{subfigure}[b]{\columnwidth}
         \centering
         \includegraphics[width=\textwidth]{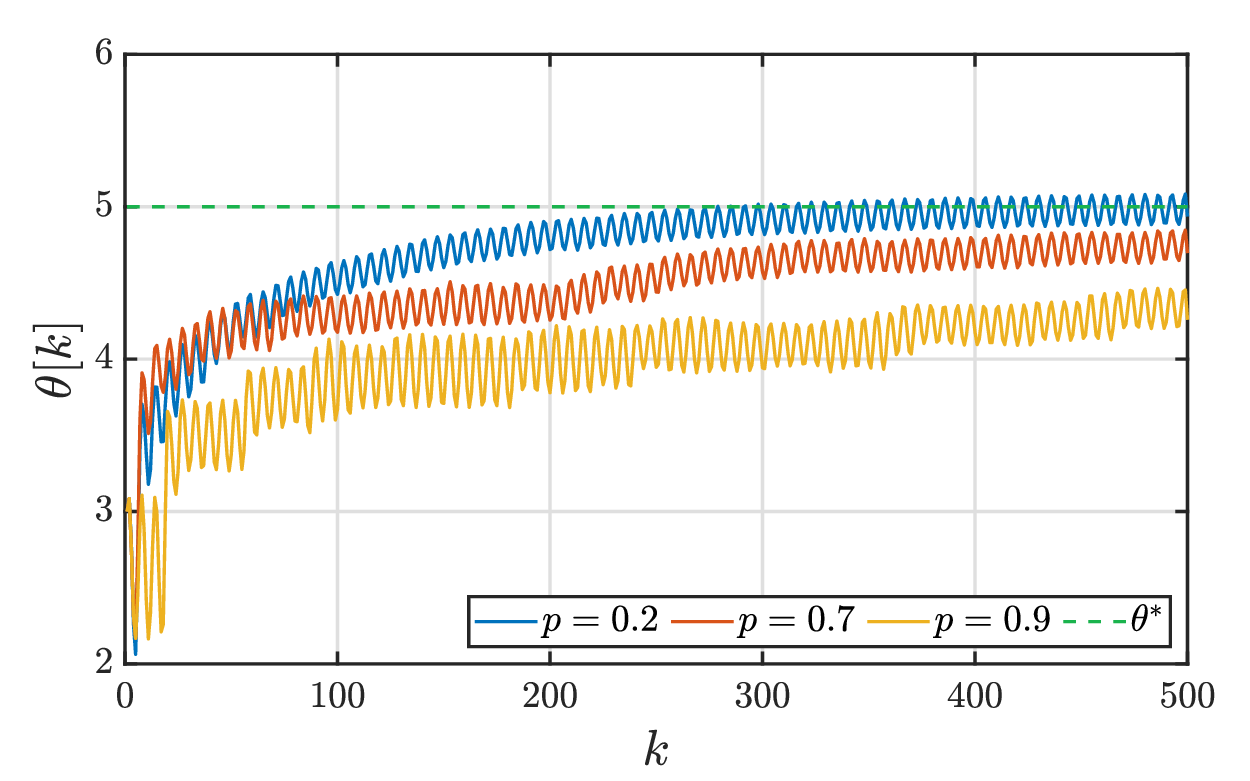}
         \caption{Input transmitted to the nonlinear map --- $\theta[k]$.}
     \end{subfigure}
    \caption{Simulation of the closed-loop system under the proposed resilient strategy against probabilistic DoS attacks with different attack probabilities $p$.}
    \label{fig:probabilistic_simulation}
\end{figure}

As observed in the deterministic case, the convergence rate reduces as the attack becomes more severe. These results are consistent with simulations using the deterministic DoS model, shown in~Fig.~\ref{fig:deterministic_simulation}, reinforcing the conclusion that the proposed resilient scheme ensures practical exponential stability for any attack probability $p < 1$, as established in Theorem~\ref{thm:stab_dos_prob}.

\subsection{Effect of dither-synchronized DoS attacks}

In this section, we compare the hold-input and zero-input resilient strategies against deterministic DoS attacks. For this purpose, we consider $\bar{\nu} = 0.5$ and three attack sequences:
\begin{itemize}
    \item[$(i)$] Arbitrary attack satisfying Assumption~\ref{assump:duration};
    \item[$(ii)$] Attack synchronized with the positive cycles of the dither signal:
    \begin{align}\label{eq:attack_postive}
        \xi[k] = \begin{cases}
            1, & \mathrm{if} \; \sin{(\omega h k) > 0}, \\
            0  & \mathrm{otherwise}.
        \end{cases}
    \end{align}
    \item[$(iii)$] Attack synchronized with the negative cycles of the dither signal:
    \begin{align}\label{eq:attack_negative}
        \xi[k] = \begin{cases}
            1, & \mathrm{if} \; \sin{(\omega h k) < 0}, \\
            0  & \mathrm{otherwise}.
        \end{cases}
    \end{align}
\end{itemize}
As the considered angular frequency of the dither signals is $\omega = 10~\mathrm{rad/s}$, the three attack sequences satisfy the fraction of time under attack of $\nu < \bar{\nu} = 0.5$. The input of the nonlinear map under the three attack sequences is shown in Fig.~\ref{fig:comparison_simulation}, using the hold- and zero-input strategies. In all three cases, the system converges to the optimum point with the hold-input strategy, but not with the zero-input strategy. 
\begin{figure}[!ht]
    \centering
    \begin{subfigure}[b]{\columnwidth}
         \centering
         \includegraphics[width=\textwidth]{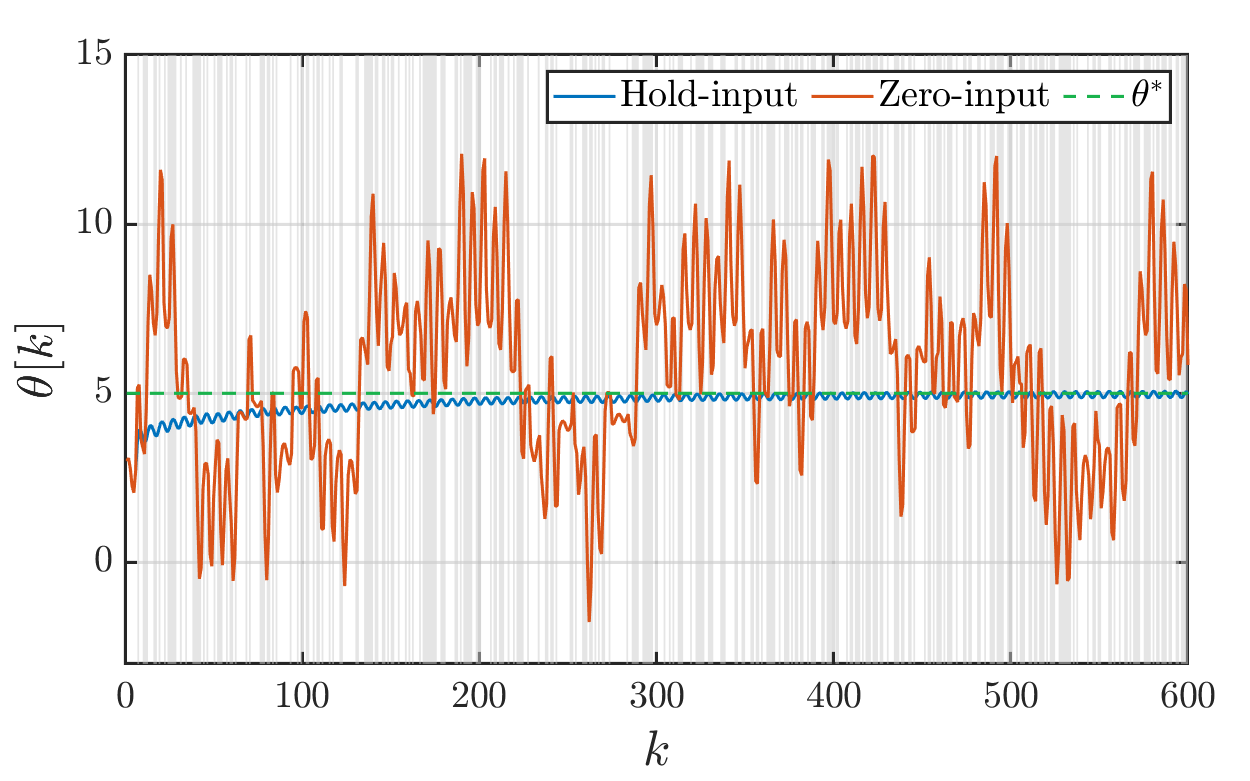}
         \caption{Arbitrary DoS attack satisfying Assumption~\ref{assump:duration} with $\bar{\nu} = 0.5$.}
     \end{subfigure}
     \begin{subfigure}[b]{\columnwidth}
         \centering
         \includegraphics[width=\textwidth]{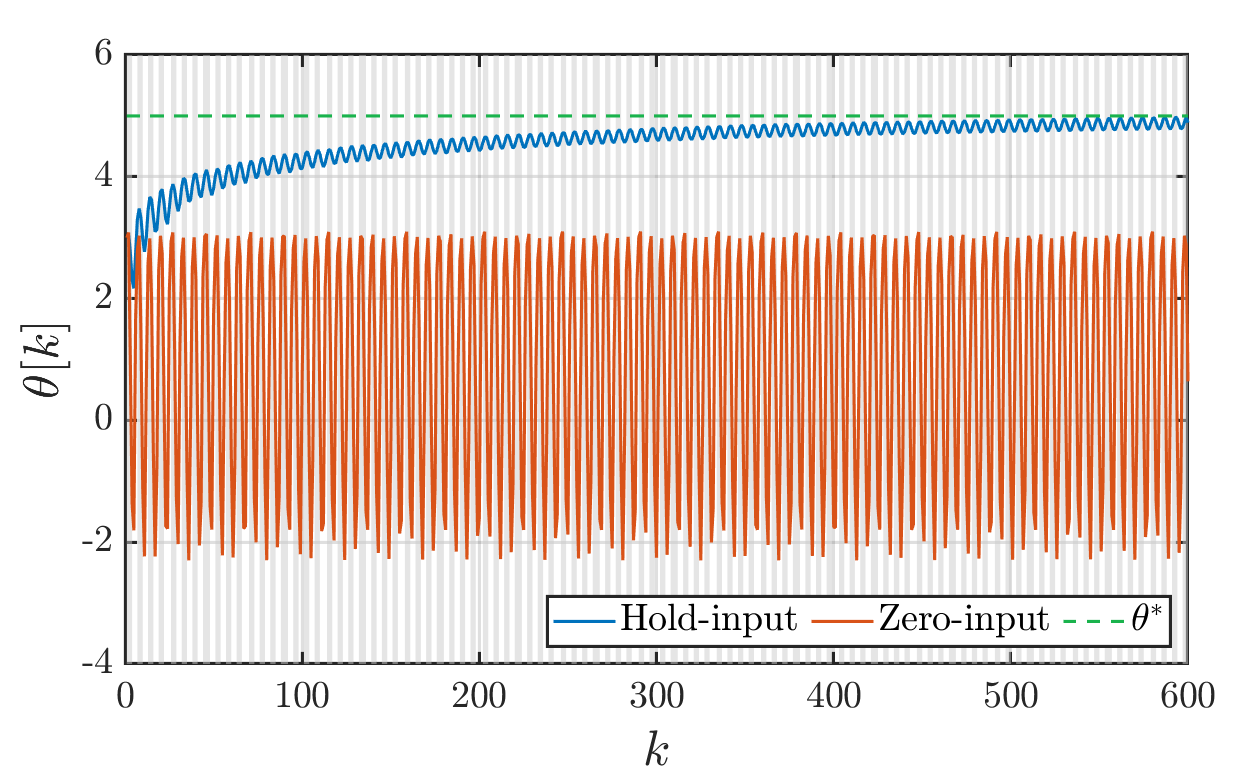}
         \caption{DoS attack synchronized with the positive cycles of the dither signal.}
     \end{subfigure}
     \begin{subfigure}[b]{\columnwidth}
         \centering
         \includegraphics[width=\textwidth]{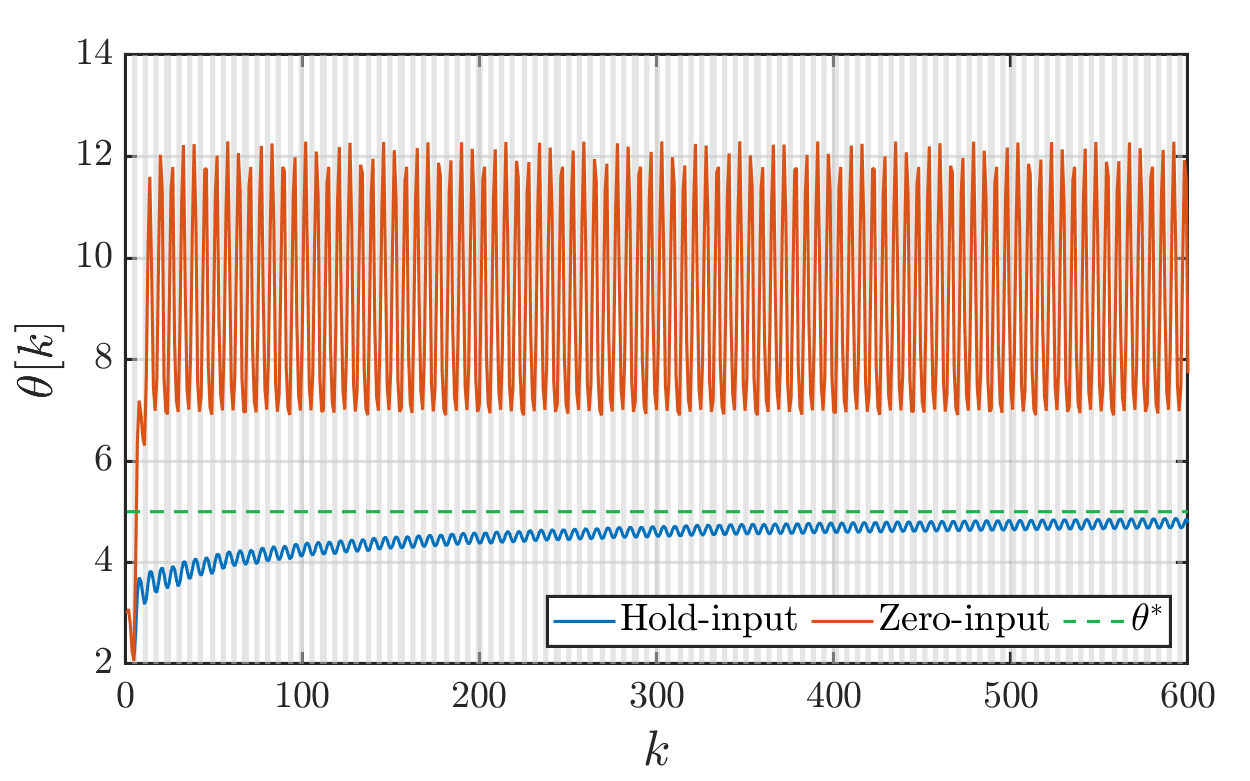}
         \caption{DoS attack synchronized with the negative cycles of the dither signal.}
     \end{subfigure}
    \caption{Comparison between the use of the hold- and zero-input strategies in $\theta_a[k]$ with nominal measurement $y_a[k] = y[k]$ under different DoS attack sequences. The gray stripes indicate the samples under DoS attacks.}
    \label{fig:comparison_simulation}
\end{figure}

In particular, the effect of the correlation between the attack signal $\sigma[k]$ and the dither signal on the closed-loop behavior under the zero-input strategy is illustrated in the simulations shown in Fig.~\ref{fig:comparison_simulation}(b) and ~\ref{fig:comparison_simulation}(c). As discussed in Section~\ref{sec:correlation_analysis}, when the attack sequence is correlated with the dither, the cross-correlation coefficient $c_1$ in~\eqref{eq:c1} becomes nonzero. This shifts the equilibrium point away from the true optimum $\theta^*$, as evidenced by the steady-state value of $\theta[k]$ converging to $\theta_{\text{eq}} = 0$ for $c_1 < 0$, in Fig.~\ref{fig:comparison_simulation}(b), or $\theta_{\text{eq}} = 2\theta^*$ for $c_1 > 0$, in Fig.~\ref{fig:comparison_simulation}(c). In contrast, the hold-input strategy effectively decouples the attack signal from the dither, thereby eliminating the bias and ensuring convergence to the true extremum $\theta^*$. These results confirm that the hold-input strategy effectively prevents the harmful effects of correlated attacks, which can compromise closed-loop convergence.

\section{Conclusion}
\label{sec:conclusion}

This paper has investigated resilient extremum seeking control for
networked cyber-physical systems subject to denial-of-service attacks.
The proposed hold-based communication strategy reveals a particularly
useful structure in the averaged closed-loop dynamics: successful
transmissions generate an exponentially contracting mode, whereas
communication losses generate a neutral mode that preserves the most
recently available information. This structure allowed resilience to be
established under both deterministic and probabilistic DoS models. In
the deterministic setting, practical exponential convergence was shown
under an average bound on the attack duration, while in the
probabilistic setting, almost-sure and in-probability convergence
properties were obtained through stochastic averaging. In both cases,
the analysis explicitly shows how increasing attack intensity slows the
optimization process without destroying convergence, provided that
communication is not permanently blocked.

Perhaps the central insight emerging from this work is that DoS attacks
against extremum-seeking loops cannot be understood solely in terms of
how much information is lost. In perturbation-based ESC, an attacker can
exploit the correlation between packet losses and the probing signal to
bias the very mechanism by which optimization information is extracted.
The analysis developed here shows that holding the last successfully
transmitted signal structurally removes this vulnerability, turning DoS
intervals into neutral, rather than destabilizing, modes of the averaged
optimization dynamics. Hence, resilience in networked ESC depends not
only on \emph{how often} communication is denied, but also on \emph{how
the losses interact with the excitation mechanism that drives the
search}. This distinction---between loss of information and corruption
of the search mechanism itself---is the main conceptual message of this
work.

Several directions emerge from these results. Extending the analysis
from static maps to nonlinear dynamic plants would clarify how DoS
attacks interact simultaneously with the plant dynamics and the
extremum-seeking time scales. Multivariable ESC introduces additional
possibilities for attacks correlated with multiple probing frequencies
and therefore raises new questions concerning attack-excitation
coupling. More general stochastic and adversarial attack models,
including asynchronous losses in the sensor-to-controller and
controller-to-actuator channels, are also of interest. Another promising
direction is the development of resilient
networked and game-theoretic architectures building on event-triggered
and update-rate-constrained Nash equilibrium seeking
\cite{rodrigues2026distributed,TAC2026bounded}, as well as extensions
to systems with delays and distributed dynamics
\cite{oliveira2026extremum}. 

\bibliographystyle{IEEEtran}
\bibliography{references}

\begin{IEEEbiography}[{\includegraphics[width=1in,height=1.25in,clip,keepaspectratio]{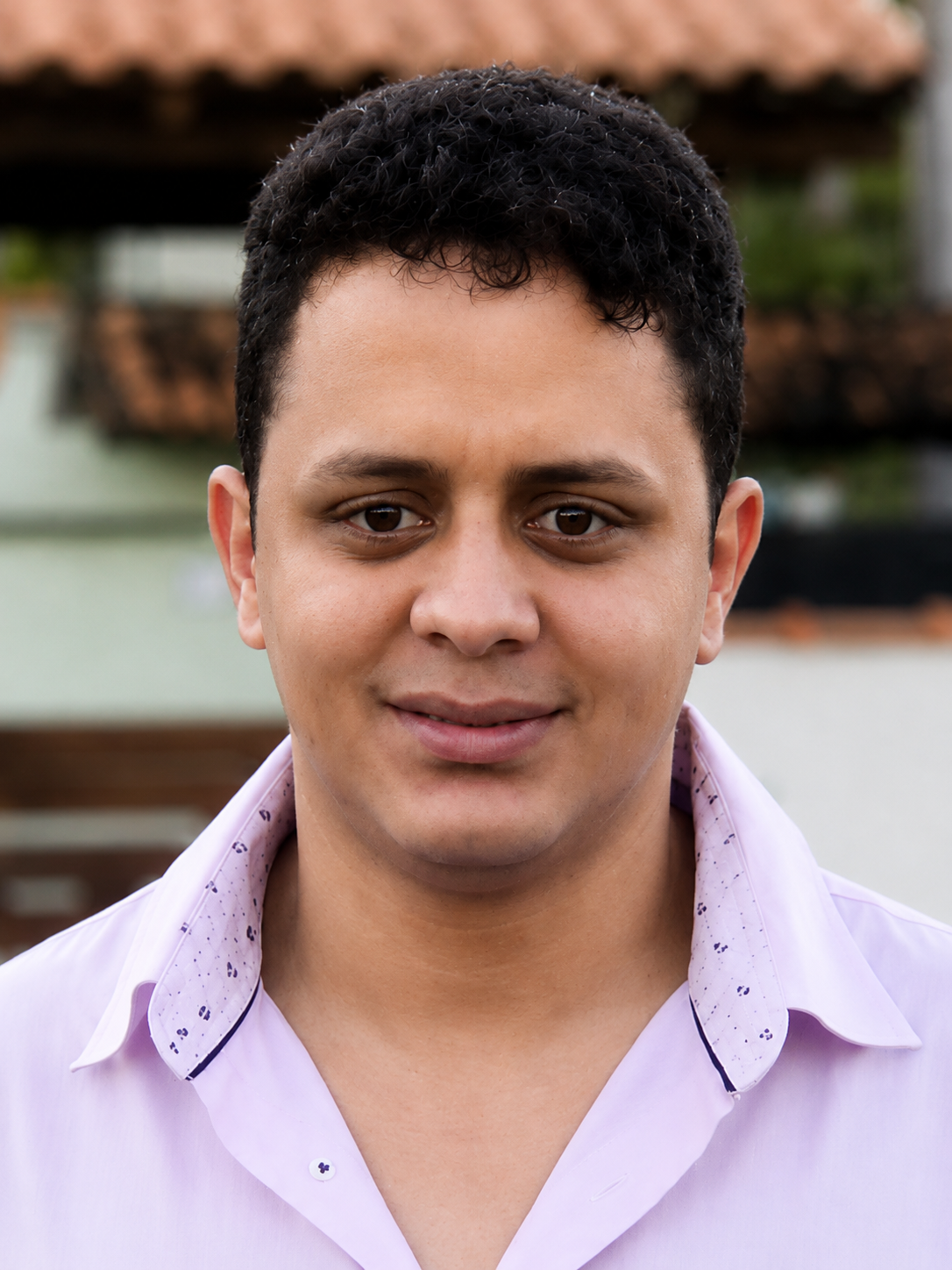}}]{Filipe da Silva Bastos Teixeira} received the B.Sc. degree in mathematics from Fluminense Federal University (UFF), Brazil, in 2018, and the B.Sc. degree in electrical engineering from Dom Bosco University, Brazil, in 2023. He is currently pursuing the M.Sc. degree in electronic engineering with the State University of Rio de Janeiro (UERJ), Rio de Janeiro, Brazil. His research interests include extremum seeking and networked control systems.
\end{IEEEbiography} 
\begin{IEEEbiography}[{\includegraphics[width=1in,height=1.25in,clip,keepaspectratio]{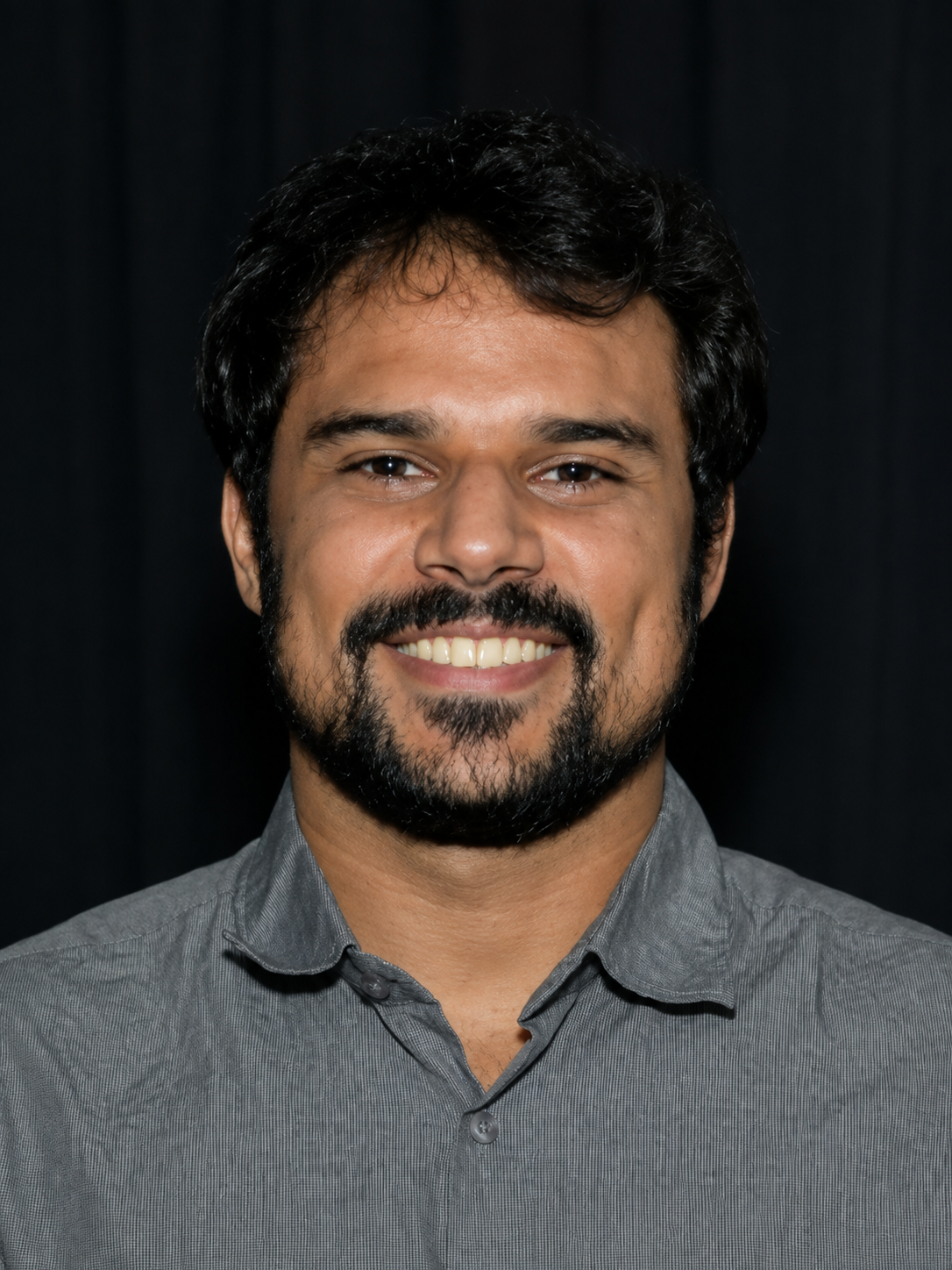}}]{ Pedro~Henrique~Silva~Coutinho} is currently a Professor at the Rio de Janeiro State University (UERJ), Rio de Janeiro, Brazil. Prof. Coutinho has been serving as Associate Editor for Fuzzy Sets and Systems since 2024 and for the Journal of Control, Automation and Electrical Systems and the International Journal of Robust and Nonlinear Control since 2026. He received the IFAC Young Author Award at the 4th Conference on Embedded Systems, Computational Intelligence, and Telematics in Control (CESCIT 2021). His research interests include robust and nonlinear control, fuzzy systems, data-driven control, cyber-physical systems, and industrial process monitoring.
\end{IEEEbiography} 
\begin{IEEEbiography}[{\includegraphics[width=1in,height=1.25in,clip,keepaspectratio]{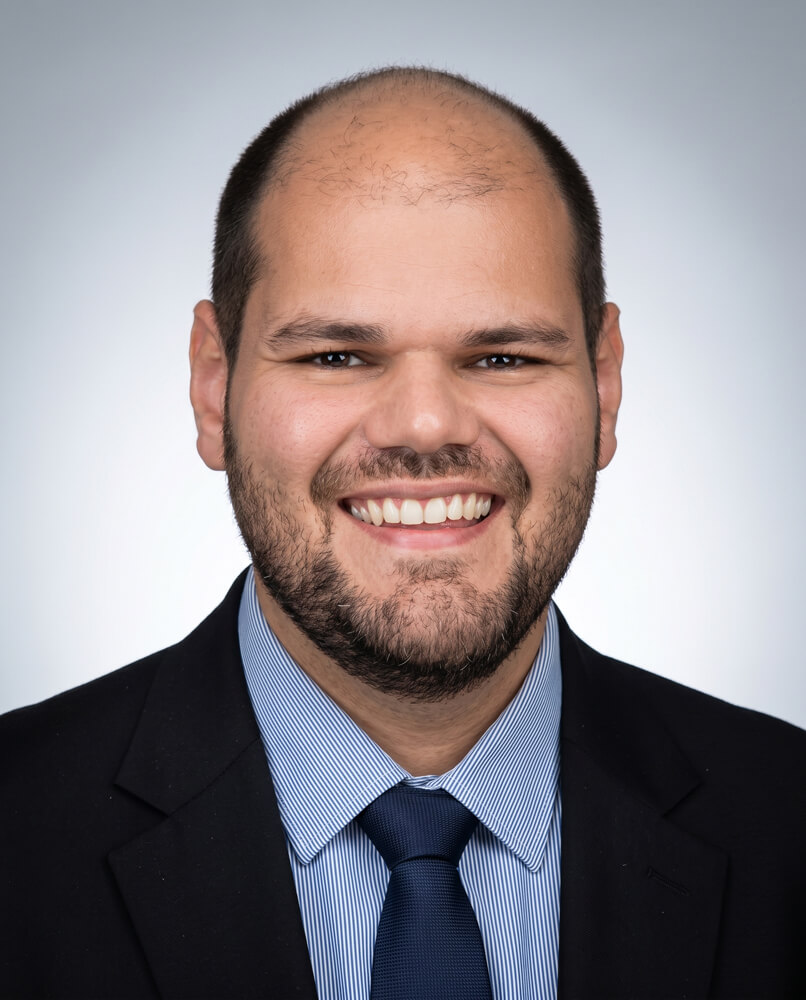}}]{Tiago Roux Oliveira} (Senior Member, IEEE) 
is currently Professor at State University of Rio de Janeiro and author of over 300 publications.
He serves as Associate Editor for several control journals (including IEEE Transactions on Automatic Control, IEEE Control Systems Letters, and IEEE Open Journal of Control Systems), chaired the IFAC TC on Adaptive and Learning Systems (2020–2026), and was elected President of the Brazilian Society of Automatics (2023–2025). His awards include 
the 2021 IEEE TCST Outstanding Paper Award. 
He is the coauthor of the book \textit{Extremum Seeking through Delays and PDEs} (SIAM, 2022), which presents a unified treatment of extremum seeking for systems with delays, partial differential equations, and distributed optimization.
\end{IEEEbiography}
\begin{IEEEbiography}[{\includegraphics[width=1in,height=1.25in,clip,keepaspectratio]{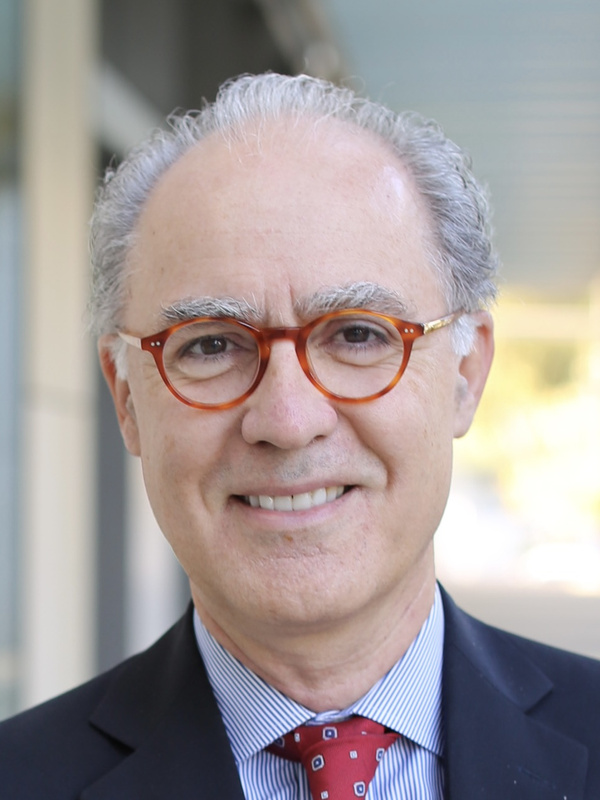}}]{Miroslav Krstic} is Fellow of SIAM, IEEE, IFAC, ASME, AAAS, IET, AIAA (AF), and Serbian Academy of Sciences and Arts.
His awards include the SIAM Reid Prize, Bellman, Oldenburger, Ragazzini, Chestnut, Paynter, Nyquist Lecture, Bode Lecture, IFAC Nonlinear Control,
IFAC Ruth Curtain DPS, IFAC Adaptive and Learning Systems, Axelby, and Schuck (’96 and ’19). He has held chief or senior editorial positions for IEEE Transactions on Automatic Control, Systems \& Control Letters, and Automatica. Professor Krstic has co-authored 19 books on adaptive, nonlinear, and
stochastic control, extremum seeking, control of PDE systems including turbulent flows, and control of delay systems.
\end{IEEEbiography}

\end{document}